\theoremstyle{plain}
\newtheorem{theorem}{Theorem}[section]
\theoremstyle{remark}
\newtheorem{remark}[theorem]{Remark}
\theoremstyle{plain}
\newtheorem{lemma}[theorem]{Lemma}
\numberwithin{equation}{section}
\def\Z{{\mathbb Z}}
\def\R{{\mathbb R}}
\def\C{{\mathbb C}}
\renewcommand{\P}{{\mathbb P}}
\renewcommand{\a}{\alpha}
\newcommand{\g}{\gamma}
\newcommand{\e}{\varepsilon}
\newcommand{\B}{\mathscr{B}}
\newcommand{\n}{\Vert}
\DeclareMathOperator{\dist}{dist}
\DeclareMathOperator{\diam}{diam}
\newcommand{\al}{{a}}
\newcommand{\rr}{{A}}
\renewcommand{\tilde}{\widetilde}
\newcommand{\beq}{\begin{equation}}
\newcommand{\eeq}{\end{equation}}
\newcommand{\bal}{\begin{aligned}}
\newcommand{\eal}{\end{aligned}}
\newcommand{\ben}{\begin{enumerate}}
\newcommand{\een}{\end{enumerate}}
\newcommand{\bit}{\begin{itemize}}
\newcommand{\eit}{\end{itemize}}
\newcommand{\book}[1]{{\em #1}}
\newcommand{\journal}[1]{{\em #1}}
\newcommand{\volume}[1]{{\bfseries #1}}
\newcommand{\name}[1]{{\sc #1}}
\begin{document}

\author{Jan Maas}
\address{
Institute for Applied Mathematics\\
University of Bonn\\
Endenicher Allee 60\\
53115 Bonn\\
Germany}
\email{maas@iam.uni-bonn.de}

\author{Jan van Neerven}
\address{
Delft Institute of Applied Mathematics\\
Delft University of Technology
\\P.O. Box 5031\\
2600 GA Delft\\
The Netherlands} \email{J.M.A.M.vanNeerven@tudelft.nl}

\author{Pierre Portal}
\address{Universit\'e Lille 1, Laboratoire Paul Painlev\'e, 59655 Villeneuve
d'Ascq, France}
\email{pierre.portal@math.univ-lille1.fr}

\thanks{The first named author is supported by Rubicon subsidy 680-50-0901
of the Netherlands Organisation for Scientific Research (NWO). The second named
author is supported by VICI subsidy 639.033.604
of the Netherlands Organisation for Scientific Research (NWO)}

\subjclass[2000]{42B25,42B30}
\keywords{Hardy spaces, Gaussian measure, Ornstein-Uhlenbeck operator, square function, maximal function}

\title[Conical square functions in $L^{1}(\gamma)$]{Conical square
functions and non-tangential maximal functions with respect to the gaussian measure}
\begin{abstract}
We study, in $L^{1}(\R^n;\gamma)$ with respect to the gaussian measure,
non-tan\-gen\-ti\-al maximal functions
and conical square functions associated with the Ornstein-Uhlenbeck operator
by developing a set of techniques  which allow us, to some extent,
to compensate for the non-doubling character of the gaussian measure.
The main result asserts that conical square functions can be controlled in $L^1$-norm by
non-tangential maximal functions. Along the way we prove a change of aperture result for the latter.
This complements recent results on gaussian Hardy spaces due to Mauceri and Meda.
\end{abstract}

\maketitle

\section{Introduction}
Gaussian harmonic analysis, understood as the study of objects associated with
the gaussian measure
$$d\gamma(x) = (2\pi)^{-n/2}\exp(-\tfrac12|x|^2)\,dx$$
on $\R^{n}$, and the Ornstein-Uhlenbeck operator $$Lf(x) =-\Delta f(x)+x\cdot\nabla f(x)$$
on function spaces such as $L^{2}(\R^{n};\gamma)$,
has recently gained new momentum following the development, by Mau\-ce\-ri and
Meda \cite{MM}, of an atomic Hardy space $H_{\rm at}^{1}(\R^{n};\gamma)$, on
which various functions of $L$ give rise to bounded operators.
Harmonic analysis in $L^{p}(\R^{n};\gamma)$ has been relatively well established for
some time, with results such as the boundedness of Riesz transforms going back
to the work of Meyer and Pisier in the 1980's.
The $p=1$ case, however, has always proven to be difficult. Over the last 30
years, some weak type $(1,1)$ estimates have been obtained, while others have
been disproved (see the survey \cite{survey}). The proofs of these results
rely on subtle decompositions and estimates of kernels. Until the seminal
Mauceri-Meda paper appeared in 2007, a large part of euclidean harmonic
analysis, such as end point estimates using Hardy and BMO spaces, seemed to have
no gaussian counterpart.
Gaussian harmonic analysis in $L^{2}(\R^{n};\gamma)$ is relatively straightforward  given the fact
that the Ornstein-Uhlenbeck operator is diagonal with respect to the basis of Hermite polynomials.
The $L^p(\R^{n};\g)$ case,  with $1<p<\infty$, is harder but
still manageable through kernel estimates.
The end points $p=1$ and $p=\infty$, however, usually require techniques such as
Whitney coverings and Calder\'on-Zygmund decompositions, for which the
non-doubling
nature of the gaussian measure, has,
so far, not been overcome.
Mauceri and Meda's  paper \cite{MM}, though, indicates a possible strategy.
The authors used the notion of
{\em admissible balls} which goes back to the work of Muckenhoupt \cite{Muck69}.
These are balls $B(x,r)$ with the property that $r\leq
a\min(1,\frac{1}{|x|})$ for some fixed admissibility parameter $a>0$. On these
admissible balls, the gaussian measure turns
out to be doubling.
The idea is then to follow classical arguments using admissible balls only.
This is easier said than done.
Indeed, admissible balls are very small when their centre is far away
from the origin, whereas tools such as Whitney decompositions of open sets
require the size of balls to be comparable to their distance to the boundary of
the set, hence possibly very large.
This may be why, although it contains many breakthrough results, Mauceri and Meda's paper
\cite{MM} does not yet give a full theory of $H^{1}$ and $BMO$ spaces for the
gaussian measure. For instance, the boundedness of key operators such as maximal
functions, conical square functions (area integrals), and above all Riesz
transforms, is  still missing.
In fact, while this paper was in its final stages, Mauceri, Meda, and Sj\"ogren
\cite{mms} proved that
Riesz transforms (more precisely {\em some} Riesz transforms, see their paper
for the details) are bounded on the Mauceri-Meda  Hardy space only in
dimension one. This suggests that the `correct' $H^{1}(\R^{n};\gamma)$ space should
be a modification of theirs.

In this paper, we take another step towards a satisfying $H^1(\R^{n};\gamma)$ theory by
studying, in $L^{1}(\R^{n};\gamma)$, non-tangential square functions and maximal functions.
These are gaussian analogues of the sublinear operators which,
in the euclidean setting, are the cornerstones of the real variable theory of $H^1(\R^n)$.
In the gaussian context, non-tangential maximal functions
were first introduced in an unpublished work by Fabes and Forzani,
who studied a gaussian counterpart of the Lusin area integral. Their
$L^p$-boundedness was shown subsequently by Forzani, Scotto, and Urbina
\cite{fsu}.
Our definition is an averaged version of a non-tangential maximal function from
a subsequent paper of Pineda and Urbina \cite{pu}.
The additional averaging adds some technical difficulties,
but experience has shown (see e.g. \cite{hm}) that such averaging can
be helpful in Hardy space theory and its applications (to boundary value
problems, for instance).

Here we prove a change of aperture formula for the maximal
function in the spirit of one of the key estimates of Coifman, Meyer and Stein
\cite{CMS}.
We then show that the non-tangential square function is controlled by the non-tangential
maximal function.
Such estimates are central in Hardy space theory (see for instance \cite{dkpv, fs}).
Thus, the purpose of this article is twofold.
On the one hand, it contributes to the development of dyadic techniques in
gaussian harmonic analysis, i.e., methods and results based on gaussian
analogues of the decomposition of $\R^{n}$ into dyadic cubes, the related
covering lemmas, and the corresponding $H^{1}$ and weak type $(1,1)$ estimates.
This makes the paper technical in nature, but we believe that the techniques
developed here will find more applications, as gaussian harmonic analysis
becomes more geometric and relies less on euclidean (after a change of
variables) estimates of the Mehler kernel.
On the other hand, this article gives some of the results required in the development of a gaussian Hardy space theory. When completed, such a theory will not only be satisfying from a pure harmonic analytic perspective, but it should also be applicable to stochastic partial differential equations (SPDE).
Given the success of Hardy space techniques in deterministic PDE, one can
think that a gaussian analogue would similarly have applications to non-linear SPDE and stochastic boundary value problems.

Now let us state the main result of this paper.
We set $$m(x):=\min\big\{1, \frac1{|x|}\big\}$$ and let
$$\Gamma^{\al} _{x}(\gamma) := \big\{(y,t) \in \R^{n}\times(0,\infty)\colon
|y-x| < t < \al m(x)\big\}$$
denote the {\em admissible cone with parameter $\al > 0$} based at the point
$x\in\R^n$. We denote by $(e^{-tL})_{t \geq 0}$ the Ornstein-Uhlenbeck semigroup acting
on $L^{p}(\R^{n};\gamma)$ for $1 < p < \infty$ (see the survey \cite{survey} and the references therein).
For test functions $u \in C_{\rm c} (\R^n)$ and admissibility parameter  $\al>0$
we consider the \emph{conical square function}
\begin{align*}
S_a u(x) =
 \Big(\int_{\Gamma_x^{a}(\g)} \frac1{\gamma(B(y,t))}|t\nabla
e^{-t^{2}L}u(y)|^{2}\,d\gamma(y)\,\frac{dt}{t}\Big)^\frac12.
\end{align*}
and the {\em non-tangential maximal function}
$$
T^{*}_{\al}u(x):=\underset{(y,t)\in \Gamma ^{\al}_{x}(\gamma)}{\sup}
\Big( \frac{1}{\gamma(B(y, t))} \int_{B(y, t)}
|e^{-t^{2}L}u(z)|^{2}d\gamma(z) \Big)^{\frac{1}{2}}.
$$

The main result of this paper reads as follows:

\begin{theorem} \label{thm:main}
For each $a > 0$ there exists an $a' > 0$ such that the conical square function $S_a$ is controlled by the non-tangential maximal function $T_{a'}^*$, in the sense that
\begin{align*}
  \| S_a u \|_{L^1(\R^{n};\g)} \lesssim\|T^{*}_{a'} u\|_{L^1(\R^{n};\g)}
\end{align*}
with implied constant independent of $u \in C_{\rm c} (\R^n).$ \end{theorem}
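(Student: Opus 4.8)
The plan is to transplant the Coifman--Meyer--Stein scheme for dominating an area integral by a non-tangential maximal function to the gaussian setting, replacing Euclidean balls and cones by admissible ones throughout so that the doubling property of $\gamma$ on admissible balls, together with the gaussian covering and Whitney lemmas developed earlier in the paper, can be used.

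\emph{Reduction to a local $L^2$ estimate.} By the layer-cake formula $\|S_au\|_{L^1(\gamma)}=\int_0^\infty\gamma(\{S_au>\lambda\})\,d\lambda$, so it is enough to control $\gamma(\{S_au>\lambda\})$ for each $\lambda>0$. Put $O_\lambda:=\{T^*_{a'}u>\lambda\}$, open since $T^*_{a'}u$ is lower semicontinuous, and $F_\lambda:=\R^n\setminus O_\lambda$. Pass to $O^*_\lambda:=\{\mathcal M\one_{O_\lambda}>\theta\}$, where $\mathcal M$ is the uncentred maximal operator built from admissible balls and $\theta\in(\tfrac14,\tfrac34)$ is chosen so that $\{\mathcal M\one_{O_\lambda}=\theta\}$ is $\gamma$-null. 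Since $\gamma$ is doubling on admissible balls, $\mathcal M$ is of weak type $(1,1)$ for $\gamma$, hence $\gamma(O^*_\lambda)\lesssim\gamma(O_\lambda)$; set $F^*_\lambda:=\R^n\setminus O^*_\lambda\subset F_\lambda$. Chebyshev's inequality gives $\gamma(\{S_au>\lambda\})\le\gamma(O^*_\lambda)+\lambda^{-2}\int_{F^*_\lambda}(S_au)^2\,d\gamma$, so the theorem follows once one proves the key estimate
\[
\int_{F^*_\lambda}(S_au(x))^2\,d\gamma(x)\ \lesssim\ \lambda^2\,\gamma(O_\lambda)\ +\ \int_{F^*_\lambda}|u(y)|^2\,d\gamma(y).
\]
The second term is harmless: $\lambda\mapsto F^*_\lambda$ is increasing, $F^*_\lambda\subset\{T^*_{a'}u\le\lambda\}$, and $|u(y)|\le T^*_{a'}u(y)$ for every $y$ (let $t\downarrow0$ inside the nondegenerate cone $\Gamma^{a'}_y(\gamma)$, using $m(y)>0$ and $e^{-t^2L}u\to u$ locally uniformly for $u\in C_{\rm c}(\R^n)$); integrating the key estimate in $\lambda$ and applying Tonelli therefore yields $\int_{\R^n}|u(y)|^2/T^*_{a'}u(y)\,d\gamma(y)\le\|T^*_{a'}u\|_{L^1(\gamma)}$.

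\emph{The energy identity and the collar terms.} Write $v(y,t):=e^{-t^2L}u(y)$, so $\partial_tv=-2tLv$; since $L=-\operatorname{div}_\gamma\nabla$ one has the pointwise identity $t\,|\nabla_yv|^2=-\tfrac14\partial_t(v^2)+t\operatorname{div}_{y,\gamma}(v\,\nabla_yv)$. Unfolding $S_a$ and using Fubini,
\[
\int_{F^*_\lambda}(S_au)^2\,d\gamma=\iint_{\R^{n+1}_+}\Psi_0(y,t)\,t\,|\nabla_yv(y,t)|^2\,d\gamma(y)\,dt,\qquad
\Psi_0(y,t):=\frac{\gamma\big(\{x\in F^*_\lambda:(y,t)\in\Gamma^a_x(\gamma)\}\big)}{\gamma(B(y,t))}\in[0,1],
\]
and $\Psi_0$ is supported in the admissible sawtooth $\mathcal R_a:=\bigcup_{x\in F^*_\lambda}\Gamma^a_x(\gamma)\subset\{t\lesssim m(y)\}$, with $\Psi_0(\cdot,0^+)=\one_{F^*_\lambda}$ a.e. Dominating $\Psi_0$ by a smooth cutoff $\Psi$ adapted to a slightly enlarged admissible sawtooth ($\Psi=1$ on $\mathcal R_a$, $\operatorname{supp}\Psi\subset\mathcal R_{a_1}$, $|\nabla\Psi|\lesssim1/t$, $\Psi\equiv0$ for large $t$, $\Psi(\cdot,0^+)=\one_{F^*_\lambda}$), inserting the identity and integrating by parts (in $t$ for the first term, in $y$ via $\operatorname{div}_\gamma$ for the second) produces three contributions: the base term $\tfrac14\int_{F^*_\lambda}|u|^2\,d\gamma$ from $t\downarrow0$, which is exactly the harmless term above; the term $\tfrac14\iint(\partial_t\Psi)v^2\,d\gamma\,dt$; and, after Cauchy--Schwarz, a term $\le(\iint_{\mathcal N}|v|^2\,t^{-1}d\gamma\,dt)^{1/2}(\iint_{\mathcal N}t|\nabla_yv|^2\,d\gamma\,dt)^{1/2}$ with $\mathcal N:=\operatorname{supp}\nabla\Psi$. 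On $\mathcal N$ every point lies in a cone $\Gamma^{a''}_x(\gamma)$ with $x\in F_\lambda$ (for $a''$ depending on $a,a_1$), so combining $T^*_{a'}u(x)\le\lambda$ with the local $L^2\!\to\!L^\infty$ bound for the Ornstein--Uhlenbeck semigroup at admissible scales (from Mehler-kernel estimates, via $e^{-t^2L}=e^{-\frac12t^2L}e^{-\frac12t^2L}$) yields $|v(y,t)|^2\lesssim\lambda^2$ on $\mathcal N$, \emph{provided $a'$ is chosen large enough in terms of $a$}; and a Carleson-type computation (Fubini, plus the fact that for each $y$ the $t$-slice of $\mathcal N$ occupies only boundedly many dyadic scales, around $\operatorname{dist}(y,F^*_\lambda)$ and the truncation height) gives $\iint_{\mathcal N}t^{-1}d\gamma\,dt\lesssim\gamma(O^*_\lambda)\lesssim\gamma(O_\lambda)$. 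Thus the second contribution is $\lesssim\lambda^2\gamma(O_\lambda)$ and the first Cauchy--Schwarz factor is $\lesssim\lambda\gamma(O_\lambda)^{1/2}$; the remaining factor $\iint_{\mathcal N}t|\nabla_yv|^2\,d\gamma\,dt$ is handled by running the same energy identity on the enlarged sawtooth containing $\mathcal N$, whose base lies in $\partial F^*_\lambda\subset\{\mathcal M\one_{O_\lambda}=\theta\}$ and is $\gamma$-null, so that no base term appears and a finite iteration — equivalently, an $L^2$ change-of-aperture estimate (an admissible-scale Fubini computation, itself one of the by-products of the paper) followed by Young's inequality with a small parameter — absorbs it into $\int_{F^*_\lambda}(S_au)^2\,d\gamma$. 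This proves the key estimate and hence the theorem.

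\emph{Main obstacle.} The difficulties are geometric rather than analytic. Because $\gamma$ is non-doubling, no covering, Whitney or sawtooth construction may be run with arbitrary balls: the scheme closes only if every ball and cone that occurs is admissible. Checking that $\mathcal R_a$ carries a smooth admissible cutoff whose collar $\mathcal N$ satisfies the asserted slice and Carleson bounds, that the weak-$(1,1)$ maximal estimate, the admissible doubling, the Mehler-kernel bounds and the $L^2$ change of aperture all survive the admissibility constraint, and that the aperture bookkeeping in the absorption step is quantitative enough to pin down $a'$ in terms of $a$, is exactly what the gaussian dyadic and covering techniques developed earlier in the paper are for, and is where the bulk of the work lies; by contrast the identity for $v$, the integrations by parts, the kernel estimates and the final absorption are routine once the admissible framework is in place.
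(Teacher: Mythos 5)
Your overall architecture (good-$\lambda$ decomposition with $F_\lambda$, $F_\lambda^*$ defined via the admissible maximal operator, an energy identity on an admissible sawtooth, collar terms) is the same Fefferman--Stein scheme the paper follows, but three of your key steps do not survive the gaussian setting as sketched. First, you bound $|e^{-t^2L}u(y,t)|\lesssim\lambda$ pointwise on the collar $\mathcal N$ by an $L^2\to L^\infty$ Mehler-kernel estimate through $e^{-t^2L}=e^{-\frac12 t^2L}e^{-\frac12 t^2L}$. The hypothesis $T^*u\le\lambda$ on $F_\lambda$ only controls $L^2(\gamma)$-averages of $e^{-s^2L}u$ over \emph{admissible} balls whose centres lie in truncated cones over $F_\lambda$; the tail of the Mehler kernel sees values of $e^{-\frac12t^2L}u$ at distances $\gg m(y)$ from $y$, which are not reachable by any admissible ball attached to $F_\lambda$ and are not controlled by $\lambda$ at all (this is exactly the non-locality/lack of off-diagonal estimates that the introduction flags as the crucial difference with the heat semigroup, and no choice of $a'$ repairs it, since enlarging the truncation parameter does not even dominate larger apertures pointwise -- that would require the change-of-aperture Theorem \ref{thm:aperture}, which you never invoke). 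The paper never proves a pointwise bound on $v$: the $|v|^2$ collar contributions are handled by Fubini together with the density property of $F^*$ and admissible doubling (transferring them to $\int_F|T^*u|^2\,d\gamma$, terms $I_1,I_5$) and by the Whitney-type covering Lemma \ref{lem:cover2} (giving $\sup_F|T^*u|^2\,\gamma(\complement F^*)$, term $I_3$). Second, your Carleson bound $\iint_{\mathcal N}t^{-1}\,d\gamma\,dt\lesssim\gamma(O^*_\lambda)$ is false for the top part of the collar: the cutoff must decay across the truncation height $t\sim m(y)$ \emph{above every point of the sawtooth's base, including all of $F^*_\lambda$}, so that piece has mass $\gtrsim\gamma(F^*_\lambda)$, which can be of order $1$ while $\gamma(O_\lambda)$ is tiny. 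Correspondingly your ``key estimate'' $\int_{F^*_\lambda}(S_au)^2\lesssim\lambda^2\gamma(O_\lambda)+\int_{F^*_\lambda}|u|^2\,d\gamma$ is stronger than what any version of this argument delivers; the correct inequality (the paper's \eqref{eq:Su}) carries the extra term $\int_F|T^*u|^2\,d\gamma$, which still suffices after integrating in $\lambda$, but your statement as written cannot be closed.

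Third, for the gradient collar term $\iint_{\mathcal N}t|\nabla v|^2$ you propose Cauchy--Schwarz plus absorption via ``an $L^2$ change-of-aperture estimate for the square function localized to $F^*_\lambda$''. No such estimate is proved in this paper (Theorem \ref{thm:aperture} concerns the maximal function only; the tent-space version lives in \cite{mnp2} and in a different form), and the absorption is circular as sketched, since the collar lies in the \emph{enlarged} sawtooth, not in the cones defining $S_a$ over $F^*_\lambda$. The paper's substitute for interior gradient estimates is the gaussian parabolic Caccioppoli inequality (Lemma \ref{lem:caccio}), whose factor $(1+r|x_0|)/r^2$ is tamed precisely because all radii occurring are admissible ($r\lesssim m(x_0)$); it converts each gradient collar term ($I_2,I_4,I_6$) into averages of $|e^{-sL}u|^2$ over slightly larger admissible balls, which are then bounded by maximal-function values at nearby points of $F$. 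This lemma, together with the covering Lemma \ref{lem:cover2} and the final aperture-change step reducing six different parameter pairs $T^*_{(\rr^{(j)},\al^{(j)})}$ to a single $T^*_{(1,a')}$, constitutes the actual core of the proof and is absent from your proposal.
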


By using the truncated cones $\Gamma^{\al}_x$, we are only averaging over admissible balls in the definition of the operators. The idea is, of course, to exploit the doubling property of the gaussian measure on these balls. This makes the operators ``admissible". The reader should notice, however, that they are not local, in the sense that their kernels are not supported in a region of the form $\{(x,y)\in \R^{2n} \;;\; |x-y| \leq \frac{a}{1+|x|+|y|}\}$. Moreover, they can not be written as sums of local operators. This is due to a lack of off-diagonal estimates, that is a crucial difference between the Ornstein-Uhlenbeck semigroup and the heat semigroup.

\subsection*{Acknowledgement}

We are grateful to the anonymous referee for valuable suggestions that lead to simplifications and generalisations of various arguments.

\section{A covering lemma}
In this section we introduce partitions of $\R^{n}$ into ``admissible''  dyadic cubes
and use them to prove a
covering lemma which will be needed later on.

We begin with a brief discussion of admissible balls.
Let
$$m(x) := \min\Big\{1,\frac{1}{|x|}\Big\}, \quad x\in\R^n.$$
For $a>0$ we define
$$\B_{a}:=\big\{B(x,r)\colon  x \in \R^{n}, \
0\le r\le am(x)\big\}.$$
The balls in $\B_{a}$ are said to be {\em admissible at scale $a$}.
It is a fundamental observation of Mauceri and Meda \cite{MM} that admissible
balls enjoy a doubling property:

\begin{lemma}[Doubling property]\label{lem:doubling}
Let $a, \tau > 0.$ There exists a constant $d=d_{a, \tau, n}$, depending
only on $a$, $\tau$, and the dimension $n$, such that if $B_1 = B(c_1,r_1)
\in \B_{a}$ and $B_2 = B(c_2,r_2)$ have non-empty intersection
and $r_2 \leq \tau r_1,$ then
$$ \g(B_2)\le d \g(B_1).$$
\end{lemma}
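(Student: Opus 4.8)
The plan is to reduce the statement to the elementary observation of Mauceri and Meda \cite{MM} recalled above: on a ball which is admissible at some fixed scale $b$, the gaussian density $x \mapsto e^{-|x|^2/2}$ is comparable to its value at the centre, with comparison constant depending only on $b$. Concretely, fix $b > 0$ and a ball $B(c,\rho)$ with $\rho \le b\,m(c)$; for $x \in B(c,\rho)$ I would write $\big||x|^2-|c|^2\big| = \big||x|-|c|\big|\big(|x|+|c|\big) \le |x-c|\big(2|c|+|x-c|\big) < \rho(2|c|+\rho)$, and then distinguish the cases $|c| \le 1$ (so $\rho \le b$) and $|c| > 1$ (so $\rho \le b/|c|$), in both of which $\rho(2|c|+\rho) \le 2b+b^2$. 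Hence $e^{-|x|^2/2}$ and $e^{-|c|^2/2}$ differ by at most a factor $e^{b+b^2/2}$ throughout $B(c,\rho)$. Integrating the density and using that the Lebesgue measure of $B(c,\rho)$ equals $\rho^n$ times the volume of the unit ball, this gives $c_{b,n}\,e^{-|c|^2/2}\rho^n \le \g(B(c,\rho)) \le C_{b,n}\,e^{-|c|^2/2}\rho^n$ with $c_{b,n},C_{b,n}$ depending only on $b$ and $n$.

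With this in hand the lemma follows quickly. Since $B_1$ and $B_2$ meet, picking $w$ in the intersection gives, for every $z \in B_2$, that $|z-c_1| \le |z-c_2|+|c_2-w|+|w-c_1| < 2r_2+r_1 \le (2\tau+1)r_1$, so $B_2 \subseteq B(c_1,(2\tau+1)r_1)$; and since $r_1 \le a\,m(c_1)$, this enlarged ball is admissible at scale $(2\tau+1)a$. Applying the upper bound of the first step at scale $(2\tau+1)a$ to $B(c_1,(2\tau+1)r_1)$ and the lower bound at scale $a$ to $B_1$, I get $\g(B_2) \le \g\big(B(c_1,(2\tau+1)r_1)\big) \le C_{(2\tau+1)a,n}\,e^{-|c_1|^2/2}\big((2\tau+1)r_1\big)^n \le \frac{(2\tau+1)^n C_{(2\tau+1)a,n}}{c_{a,n}}\,\g(B_1)$, which is the claim with $d = (2\tau+1)^n C_{(2\tau+1)a,n}/c_{a,n}$, depending only on $a$, $\tau$, $n$.

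The argument is entirely routine and I do not anticipate a real obstacle; the only thing requiring care is the bookkeeping of admissibility scales — $B_1$ is admissible at scale $a$, but the enlarged ball $B(c_1,(2\tau+1)r_1)$ that captures $B_2$ is only admissible at the larger scale $(2\tau+1)a$ — so that one verifies the final constant genuinely depends only on $a$, $\tau$ and $n$, and not on the particular balls $B_1,B_2$.
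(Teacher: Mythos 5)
Your proof is correct: the density comparison $e^{-|x|^2/2}\eqsim e^{-|c|^2/2}$ on a ball admissible at scale $b$, with constant depending only on $b$ (and hence $\g(B(c,\rho))\eqsim e^{-|c|^2/2}\rho^n$), combined with the inclusion $B_2\subseteq B(c_1,(2\tau+1)r_1)$ and the observation that the enlarged ball is admissible at scale $(2\tau+1)a$, yields the doubling constant $d_{a,\tau,n}$ exactly as claimed, and your bookkeeping of the admissibility scales is accurate. The paper itself gives no proof of this lemma, attributing it to Mauceri and Meda \cite{MM}; your argument is the standard one and is essentially the same density-comparison device the paper deploys elsewhere (e.g.\ in the proof of Lemma \ref{lem:maximal}), so there is nothing to object to.
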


In particular this lemma implies that for all $a>0$ there exists a constant $d'
= d_a'$ such that for all $B(x,r)\in \mathscr{B}_a$ we have
\begin{align*}
\g(B(x,2r))\le d' \g(B(x,r)).
\end{align*}

The first part of the next lemma, which is taken from \cite{mnp2}, says,
among other things, that if $B(x,r)\in\mathscr{B}_a$ and
$|x-y|<br$, then $B(y,r)\in\mathscr{B}_c$ for some constant $c=c_{a,b}$ which
depends only on $a$ and $b$.

\begin{lemma}
\label{lem:admsym}
 Let $a,b>0$ be given.
\begin{enumerate}
\item[\rm(i)] If $r\le am(x)$ and $|x-y|\leq br$, then $r\le c_{a,b}m(y),$ where
$c_{a,b} := a(1+ab)$.
\item[\rm(ii)] If $|x-y| \leq bm(x)$, then $m(x)\le (1+b)m(y)$ and $m(y)\le
(2+2b)m(x)$.
\end{enumerate}
\end{lemma}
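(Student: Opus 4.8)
The plan is to reduce both parts to completely elementary estimates for the quantity $1/m(x)=\max\{1,|x|\}$, using only the triangle inequality together with a short case distinction according to whether $|x|$ and $|y|$ lie below or above $1$. Since part~(i) is an immediate consequence of part~(ii), I would prove (ii) first and then deduce (i).

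So suppose $|x-y|\le bm(x)$. Since $m(x)\le 1$, the triangle inequality gives $|y|\le |x|+bm(x)\le |x|+b$ and, symmetrically, $|x|\le |y|+bm(x)\le |y|+b$. To obtain $m(x)\le(1+b)m(y)$, equivalently $\max\{1,|y|\}\le(1+b)\max\{1,|x|\}$, I distinguish two cases: if $|x|\ge 1$ then $bm(x)=b/|x|\le b|x|$, so $|y|\le(1+b)|x|$, and since also $1\le(1+b)\max\{1,|x|\}$ this gives the inequality; if $|x|<1$ then $m(x)=1$ and $|y|\le |x|+b<1+b$, which again gives it. For the reverse inequality $m(y)\le(2+2b)m(x)$, i.e.\ $\max\{1,|x|\}\le(2+2b)\max\{1,|y|\}$, one argues in the same way with the roles of $x$ and $y$ interchanged: if $|y|\ge 1$ then $|x|\le(1+b)|y|\le(2+2b)\max\{1,|y|\}$, while if $|y|<1$ then $|x|\le |y|+b<1+b$, so $\max\{1,|x|\}\le 1+b\le(2+2b)\max\{1,|y|\}$. (The constant $2+2b$ here is wasteful but suffices.)

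Part~(i) then follows with no extra work: if $r\le am(x)$ and $|x-y|\le br$, then $|x-y|\le br\le abm(x)$, so applying part~(ii) with $b$ replaced by $ab$ yields $m(x)\le(1+ab)m(y)$, whence $r\le am(x)\le a(1+ab)m(y)=c_{a,b}m(y)$. The argument is routine throughout; the only point that needs a moment's thought is the ``mixed'' regime in which one of $|x|$, $|y|$ is smaller than $1$ while the other is at least $1$, which is exactly what forces the case splits above together with the comparison $b/|x|\le b|x|$ valid for $|x|\ge 1$. The degenerate point $x=0$ is covered by the case $|x|<1$.
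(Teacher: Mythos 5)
Your proof is correct. Note that the paper itself does not prove this lemma at all: it imports it from the companion paper \cite{mnp2}, so there is no argument here to compare against; your elementary case analysis on whether $|x|$, $|y|$ lie below or above $1$, phrased via $1/m(x)=\max\{1,|x|\}$, is exactly the kind of routine verification the authors are omitting, and your reduction of (i) to (ii) with $b$ replaced by $ab$ gives precisely the stated constant $c_{a,b}=a(1+ab)$. As you observe, the constant $2+2b$ in the second inequality of (ii) is not optimal --- your own case analysis (with $\max\{1,|x|\}\le(1+b)\max\{1,|y|\}$ in both cases) actually yields $m(y)\le(1+b)m(x)$ --- but since the lemma only claims the weaker bound, this is harmless.
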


For $k\geq 0$ let $\Delta_k$ be the set of dyadic cubes at scale
$k$, i.e.,
$$ \Delta_k = \{2^{-k}(x+[0,1)^n)\colon x\in \Z^n\}.$$
Following \cite{mnp2}, in the gaussian case we only use cubes whose diameter depends
on another parameter $l$, which keeps track of the distance from the ball
to the origin. More precisely, define the {\em layers}
$$ L_0 = [-1,1)^n, \quad L_{l} = [-2^l, 2^l)^n\setminus [-2^{l-1}, 2^{l-1})^n \
\ (l\ge 1), $$
and define, for $k,l\ge 0$,
$$ \Delta_{k,l}^\gamma =\{ Q\in \Delta_{l+k}\colon Q\subseteq L_l\},
\quad \Delta_{k}^\gamma =\bigcup_{l\ge 0}\Delta_{k,l}^\gamma,
\quad \Delta^\gamma = \bigcup_{k\ge 0} \Delta_k^\gamma.$$ Note
that if $Q\in \Delta_k^\gamma$ with $Q\subseteq L_l$, then its
centre $c_Q$ has norm $2^{l-1} \leq |c_Q| \leq 2^l\sqrt{n} $ and we have
\begin{align}\label{eq:diamQ}
\diam(Q) = 2^{-k-l}\sqrt{n} \le 2^{-k}n\,m(c_Q).
\end{align}

\begin{lemma}\label{lem:cube-ball}
If a ball $B(x,r)\in \B_{a}$ intersects a cube $Q\in \Delta_{0}^{\gamma}$ with center $c_Q$,
then 
\begin{align*}
r \leq  2a(a+n)m(c_{Q}).
\end{align*}
\end{lemma}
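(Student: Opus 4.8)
The plan is to pick a point $z$ in the nonempty intersection $B(x,r)\cap Q$ and play off the three scales involved: the radius $r$, the diameter of $Q$, and the distances $|x-z|$ and $|z-c_Q|$. From $B(x,r)\in\B_a$ we have $r\le a\,m(x)$ and $|x-z|\le r$, and from $z\in Q$ (together with the fact that $c_Q\in Q$) and \eqref{eq:diamQ} applied with $k=0$ we get $|z-c_Q|\le\diam Q\le n\,m(c_Q)$. The triangle inequality then gives $|x-c_Q|\le r+\diam Q\le a\,m(x)+n\,m(c_Q)$. The point is that this mixes $m(x)$ and $m(c_Q)$, which are only comparable when $x$ and $c_Q$ are close on the natural scale; so I would split into two cases according to the size of $m(x)$ relative to $m(c_Q)$.

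If $m(x)\le 2\,m(c_Q)$, there is nothing to do: $r\le a\,m(x)\le 2a\,m(c_Q)\le 2a(a+n)\,m(c_Q)$, using only $a+n\ge 1$. The real case is $m(x)>2\,m(c_Q)$. Here I would first observe that $c_Q$ must lie far from the origin: since $m(x)\le 1$, this forces $m(c_Q)<\tfrac12$, hence $|c_Q|>2$ and $m(c_Q)=1/|c_Q|$; and the inequality $m(x)>2/|c_Q|$ forces $|x|<|c_Q|/2$ (if $|x|\le 1$ this is immediate from $|c_Q|>2$, while if $|x|>1$ it is $1/|x|>2/|c_Q|$ rearranged). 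Consequently $|x-c_Q|\ge|c_Q|-|x|>|c_Q|/2$, whereas the upper bound above gives $|x-c_Q|\le a\,m(x)+n\,m(c_Q)\le a+n$ (using $m(x),m(c_Q)\le 1$). Hence $|c_Q|<2(a+n)$, i.e. $m(c_Q)>\tfrac1{2(a+n)}$, and therefore $r\le a\,m(x)\le a<2a(a+n)\,m(c_Q)$, which closes this case and the proof.

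I do not expect a genuine obstacle here — the lemma is a geometric bookkeeping statement — but the one point that needs care is the dichotomy itself: in the ``far apart'' regime $m(x)>2\,m(c_Q)$ one should not try to compare $m(x)$ and $m(c_Q)$ through Lemma \ref{lem:admsym}, but instead use that $B(x,r)$ meeting the diameter-controlled cube $Q$ pins down $|c_Q|$ by an absolute constant depending only on $a$ and $n$. An alternative is to run everything through Lemma \ref{lem:admsym}(i)--(ii) after first distinguishing $r\ge\diam Q$ from $r<\diam Q$, but the case split on $m(x)$ versus $m(c_Q)$ reaches the stated constant $2a(a+n)$ with the least fuss.
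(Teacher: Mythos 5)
Your proof is correct and rests on essentially the same ingredients as the paper's: the bounds $r\le a\,m(x)\le a$, $\diam(Q)\le n\,m(c_Q)$ from \eqref{eq:diamQ}, and a two-case triangle-inequality argument landing on the same constant $2a(a+n)$. The only organizational difference is the dichotomy: the paper splits on $|c_Q|\gtrless 2(a+n)$ and, in the far-from-origin case, bounds $r\le 2a\,m(c_Q)$ directly via $r\le a/|x|$, whereas you split on $m(x)\gtrless 2\,m(c_Q)$ and show that the second alternative forces $|c_Q|<2(a+n)$, i.e.\ it collapses to the paper's easy case where $m(c_Q)\ge \tfrac1{2(a+n)}$ and $r\le a$ suffices.
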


\begin{proof}
We consider two cases.
First, if $|c_{Q}| \geq 2(a+n)$, we notice that
$$
r \leq \frac{a}{|x|}
  \leq \frac{a}{|c_{Q}|-(r+nm(c_{Q})/2)}
  \leq \frac{a}{|c_{Q}|-(a+n/2)}
  \leq \frac{2a}{|c_{Q}|}
  = 2am(c_{Q});
$$
in the first inequality we used that $\diam(Q) \leq nm(c_{Q})$ by \eqref{eq:diamQ},
in the second we used that $m(c_Q)\le 1$ and $r\le am(x)\le a$,
the third follows from the assumption we made,
and the final identity follows by noting that $|c_Q|\ge 2n\ge  1$.
Second, if $|c_{Q}| \leq 2(a+n),$ then
together with $1 \leq 2(a+n)$ we obtain  $1 \leq 2(a+n)m(c_{Q})$ and $r \leq a \leq 2 a(a+n) m(c_Q)$.
\end{proof}

We denote by $\a\circ Q$ the cube with the same centre as $Q$ and $\a$ times its
side-length; similar notation is used for balls.
Cubes in $\Delta^\gamma$ enjoy the following doubling property:

\begin{lemma}\label{lem:doubling-cubes}
Let $\alpha > 0.$ There exists a constant $C_{\alpha,n},$ depending only on
$\alpha$ and the dimension $n,$ such that for every cube $Q \in \Delta^\gamma$
we have
\begin{align*}
 \gamma(\alpha \circ Q) \leq C_{\alpha,n} \gamma(Q).
\end{align*}
\end{lemma}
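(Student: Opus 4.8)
The plan is to trap $Q$ between two concentric balls and then invoke the doubling property of admissible balls, Lemma~\ref{lem:doubling}. Fix $Q \in \Delta^\gamma$, say $Q \in \Delta_{k,l}^\gamma$, with centre $c_Q$ and side-length $s = 2^{-k-l}$, so that $\diam(Q) = s\sqrt{n}$. Let $B_{\mathrm{in}} := B(c_Q, s/2)$ be the ball inscribed in $Q$, so $B_{\mathrm{in}} \subseteq Q$, and let $B_{\mathrm{out}} := B(c_Q, \alpha\sqrt{n}\, s)$. Since $\alpha\circ Q$ is the cube of side-length $\alpha s$ centred at $c_Q$, every one of its points lies within distance $\alpha\sqrt{n}\,s$ of $c_Q$, whence $\alpha\circ Q \subseteq B_{\mathrm{out}}$. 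Consequently
\begin{align*}
\gamma(\alpha\circ Q) \le \gamma(B_{\mathrm{out}}) \qquad\text{and}\qquad \gamma(B_{\mathrm{in}}) \le \gamma(Q).
\end{align*}

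First I would check that $B_{\mathrm{in}}$ is admissible at a scale depending only on $n$. By \eqref{eq:diamQ} we have $\diam(Q) = s\sqrt{n} \le 2^{-k} n\, m(c_Q) \le n\, m(c_Q)$, and hence $s/2 \le \tfrac{\sqrt{n}}{2}\, m(c_Q)$, so that $B_{\mathrm{in}} \in \B_{a}$ with $a := \tfrac{\sqrt{n}}{2}$, a constant depending only on $n$. I would then apply Lemma~\ref{lem:doubling} with $B_1 = B_{\mathrm{in}}$, $B_2 = B_{\mathrm{out}}$, and $\tau := 2\alpha\sqrt{n}$: the two balls are concentric, so they intersect, and the radius of $B_{\mathrm{out}}$ equals $\tau$ times that of $B_{\mathrm{in}}$, so the lemma provides a constant $d = d_{a,\tau,n}$, depending only on $\alpha$ and $n$, with $\gamma(B_{\mathrm{out}}) \le d\,\gamma(B_{\mathrm{in}})$. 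Combining this with the two displayed inequalities gives
\begin{align*}
\gamma(\alpha\circ Q) \le \gamma(B_{\mathrm{out}}) \le d\,\gamma(B_{\mathrm{in}}) \le d\,\gamma(Q),
\end{align*}
which is the assertion with $C_{\alpha,n} := d$.

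There is no serious obstacle here: the argument is a straightforward sandwich estimate, and the only point requiring any care is that the inscribed ball $B_{\mathrm{in}}$ lies in a single fixed family $\B_a$, uniformly over all cubes $Q \in \Delta^\gamma$. This uniformity is exactly what the layer-dependent choice of dyadic scale buys us, and it is recorded in \eqref{eq:diamQ}; once it is in hand, the layered structure of $\Delta^\gamma$ plays no further role and the claim reduces entirely to Lemma~\ref{lem:doubling}.
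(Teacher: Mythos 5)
Your proposal is correct and follows essentially the same route as the paper: trap $Q$ between the inscribed ball $B(c_Q,s/2)$ (which lies in $\B_{\sqrt n/2}$ by the scale condition \eqref{eq:diamQ}) and a concentric ball containing $\alpha\circ Q$, then invoke the doubling property of Lemma~\ref{lem:doubling}. The only cosmetic differences are that the paper normalises to $\alpha>1$ and phrases the outer containment as $\alpha\circ Q\subseteq \alpha\sqrt n\circ B$, which are immaterial.
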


\begin{proof}
Without loss of generality we may assume that $\alpha > 1.$
Let $Q \in \Delta_{k,l}^\gamma$ with center $y$ and side-length $2s.$ Set $B =
B(y,s)$ and note that $B \subseteq Q.$ Moreover, we have $\alpha \circ Q
\subseteq \alpha \sqrt{n} \circ B.$
Since, if $|y| > 1,$
\begin{align*}
  2s = \frac{\diam(Q)}{\sqrt{n}}
     = 2^{-k-l}
 \leq 2^{-l}
 \leq \frac{\sqrt{n}}{|y|}
 = \sqrt{n} m(y),
\end{align*}
and, if $|y| \leq 1,$
\begin{align*}
   2s = 2^{-k-l}
      \leq 1
      \leq \sqrt{n} m(y),
\end{align*}
it follows that $B \in \B_{\sqrt{n}/2}.$ Using the doubling property for
admissible balls from Lemma \ref{lem:doubling} we now obtain
\begin{align*}
  \gamma(\alpha\circ Q)
  \leq \gamma(\alpha \sqrt{n} \circ B)
  \leq C_{\alpha, n} \gamma(B)
  \leq C_{\alpha, n} \gamma(Q).
\end{align*}
\end{proof}

\begin{lemma}\label{lem:cover2}
Let $F\subseteq \R^n$ be a non-empty set, let $a,b,c>0$ be fixed, and let
$$
   O := \{x\in \R^n  \colon  0<d(x,F)\le am(x)\}.
$$
There exists a sequence
$(x_k)_{k\ge 1}$ in $O$ with the following properties:
\ben
\item[\rm(i)] $\displaystyle O\subseteq \bigcup_{k\ge 1} B(x_k, b d(x_k,F))$;
\item[\rm(ii)] $\displaystyle \sum_{k\ge 1} \g(B(x_k, cd(x_k,F))) \lesssim \g(O)$ with
constant depending only on $a$, $b$, $c$, $n$.
\een
\end{lemma}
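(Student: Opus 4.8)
The plan is to build $(x_k)$ by a single Vitali selection inside $O$, read off (i) at once, and prove the packing estimate (ii) by attaching to each $x_k$ an auxiliary ``shadow ball'' that genuinely lies inside $O$ and whose gaussian measure dominates that of $B(x_k,c\,d(x_k,F))$. We may assume $F$ closed (replace it by $\overline F$: this changes neither $d(\cdot,F)$ nor $O$, and nearest points now exist), and the case $O=\emptyset$ is trivial. To begin, apply the Vitali $5r$-covering lemma to $\{B(x,\tfrac b5 d(x,F)):x\in O\}$, whose radii are bounded by $\tfrac b5 a$. This yields a countable \emph{disjoint} subfamily $\{B(x_k,\tfrac b5\delta_k)\}_{k\ge1}$ with $x_k\in O$ and $O\subseteq\bigcup_k B(x_k,b\delta_k)$, where $\delta_k:=d(x_k,F)$; this is exactly (i). For each $k$ fix $z_k\in F$ with $|x_k-z_k|=\delta_k$.

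The difficulty with (ii) is that a ball of radius $\asymp\delta_k$ centred at $x_k$ need not lie in $O$: the point $x_k$ may sit on the outer rim of the collar, where $\delta_k$ is as large as $a\,m(x_k)$, and then any perturbation comparable to $\delta_k$ (together with the attendant change of $m$) can push $d(\cdot,F)$ above $a\,m(\cdot)$. The cure is to place the auxiliary ball \emph{near $z_k$} instead. Put $j:=\lceil\log_2\tfrac{3(1+a)}{2}\rceil$, $p_k:=z_k+2^{-j}(x_k-z_k)$, $r_k:=2^{-j-1}\delta_k$ and $S_k:=B(p_k,r_k)$. An elementary computation gives $d(p_k,F)=2^{-j}\delta_k$ and $|x_k-p_k|=(1-2^{-j})\delta_k$, so every $y\in S_k$ satisfies
\[
0<2^{-j-1}\delta_k\le d(y,F)\le 3\cdot2^{-j-1}\delta_k,\qquad |y-x_k|\le(1-2^{-j-1})\delta_k<\delta_k\le a\,m(x_k).
\]
By Lemma~\ref{lem:admsym}(ii), $m(x_k)\le(1+a)m(y)$, whence $d(y,F)\le 3\cdot2^{-j-1}(1+a)\,a\,m(y)\le a\,m(y)$ by the choice of $j$; thus $S_k$ is a nonempty ball contained in $O$, of radius comparable to $\delta_k$ with constant depending only on $a$.

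Two facts then give (ii). First, $B(x_k,c\delta_k)$ and $S_k$ are both contained in $\widetilde B_k:=B(x_k,(1+c)\delta_k)$, which is admissible at scale $(1+c)a$, while $S_k$ is admissible at a scale depending only on $a$; since $S_k\subseteq\widetilde B_k$ and the ratio of their radii is controlled in terms of $a,c$, the doubling property (Lemma~\ref{lem:doubling}) yields $\g(B(x_k,c\delta_k))\le\g(\widetilde B_k)\lesssim\g(S_k)$ with constant depending only on $a,c,n$. Second, $S_k\subseteq B(x_k,\delta_k)$, and if $S_j\cap S_k\ne\emptyset$ then comparing $d(p_j,F)$ with $d(p_k,F)$ forces $\tfrac13\le\delta_j/\delta_k\le3$ and hence $x_j\in B(x_k,4\delta_k)$; as the Vitali balls $B(x_i,\tfrac b5\delta_i)$ are pairwise disjoint with radii bounded below by $\tfrac b{15}\delta_k$, a volume count bounds the number of such $j$ by a constant $N=N(b,n)$, i.e. $\sum_k\one_{S_k}\le N$. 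Combining,
\[
\sum_k\g\bigl(B(x_k,c\delta_k)\bigr)\ \lesssim\ \sum_k\g(S_k)\ =\ \int\sum_k\one_{S_k}\,d\g\ \le\ N\,\g\Bigl(\bigcup_k S_k\Bigr)\ \le\ N\,\g(O),
\]
which is (ii), with implied constant depending only on $a,b,c,n$.

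The step I expect to be the crux is the construction of $S_k$: one must take its depth $2^{-j}\delta_k$ below $F$ large enough that, after inflating $m$ by the unavoidable factor $1+a$ from Lemma~\ref{lem:admsym}(ii), one still lands below $a\,m(y)$, yet keep its radius $2^{-j-1}\delta_k$ smaller than that depth so that $S_k$ does not meet $F$; the two constraints are compatible precisely because the collar has width $\asymp m$, which is what makes a fixed choice $j=j(a)$ work uniformly in $F$. Once $S_k\subseteq O$ is secured, the doubling lemma and the packing count are routine bookkeeping.
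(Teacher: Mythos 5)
Your proof is correct, but it follows a genuinely different route from the paper. The paper covers the \emph{enlarged} collar $O'=\{0<d(z,F)<2am(z)\}$ by a Whitney decomposition into cubes, transfers the Whitney geometry to the points $x_k\in O\cap Q_k$ via the claim $d(x,F)\lesssim d(x,\complement O')$, and then — this is where most of the work lies — proves $\g(O')\lesssim\g(O)$ by covering the intermediate layer $O''$ with maximal admissible dyadic cubes contained in $O$ and invoking the cube-doubling Lemma \ref{lem:doubling-cubes}. You instead make a single Vitali $5r$-selection inside $O$ (legitimate, since the radii $\tfrac b5 d(x,F)\le\tfrac{ab}5$ are uniformly bounded), which gives (i) immediately, and you replace the whole $O'$-versus-$O$ comparison by the ``shadow ball'' device: pushing from $x_k$ towards a nearest point $z_k\in\overline F$ by the fixed fraction $2^{-j}$, $j=j(a)$, produces $S_k=B(p_k,2^{-j-1}\delta_k)$ with $d(p_k,F)=2^{-j}\delta_k$ exactly, and your verification that $S_k\subseteq O$ (using Lemma \ref{lem:admsym}(ii) and the choice $2^{j+1}\ge 3(1+a)$) is sound. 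The remaining two steps also check out: $\g(B(x_k,c\delta_k))\lesssim\g(S_k)$ follows from Lemma \ref{lem:doubling} with $B_1=S_k$ admissible at scale depending only on $a$ and radius ratio depending only on $a,c$; and the bounded overlap of the $S_k$ follows from the two-sided comparison $\tfrac13\le\delta_{k'}/\delta_k\le3$ forced by comparing $d(y,F)$ on $S_k\cap S_{k'}$, together with the Euclidean packing count for the pairwise disjoint Vitali balls $B(x_i,\tfrac b5\delta_i)$. The net effect is a shorter, more self-contained argument: you use only Lemmas \ref{lem:doubling} and \ref{lem:admsym}, the classical Vitali lemma, and a Lebesgue volume count, bypassing Stein's Whitney decomposition, Lemma \ref{lem:cube-ball}, Lemma \ref{lem:doubling-cubes}, and the enlargement inequality $\g(O')\lesssim\g(O)$; what the paper's route buys in exchange is consistency with its admissible dyadic-cube machinery and the intermediate fact $\g(O')\lesssim\g(O)$, which has some independent interest. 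One cosmetic remark: you use $j$ both for the fixed integer $j(a)$ and as a ball index in the overlap argument; rename one of them.
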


\begin{proof}
Let $\delta := \min\{\frac{1}{2}, b\}$.
We use a Whitney covering of $$O':= \big\{z\in \R^n \colon 0<d(z,F)<  2am(z)\big\}$$ by disjoint cubes $Q_{k}$ such that
$$
\tfrac14{\delta} d(Q_{k},\complement O') \leq {\rm diam}\,(Q_{k}) \leq \delta d(Q_{k},\complement O'),
$$
(see \cite[VI.1]{littleStein}).
We discard the cubes that do not intersect $O$ and relabel the remaining sequence of cubes as $(Q_k)_{k\ge 1}$ with centers $(c_k)_{k \geq 1}$.
For each $k\ge 1$ pick $x_{k} \in O\cap Q_{k}$.

To check that the balls $B(c_k, \diam(Q_k))$ are admissible, we use the fact that $\delta\le \frac12$ to obtain
$$ |c_k - x_k| \le \tfrac12 {\rm diam}\,(Q_k)
\le \tfrac1{4} d(Q_k, \complement O') \le \tfrac1{4} d(x_k,F)
\le \tfrac14{a}m(x_k).$$
Lemma \ref{lem:admsym}(ii) then shows that $m(x_k) \le (1+\frac{a}{4}) m(c_k)$.
It follows that the balls  $B(c_k, \diam(Q_k))$ are admissible.

Next, $\diam(Q_{k})\le \delta d(Q_{k},\complement O') \le b d(x_k,F)$, so (i) follows from$$
O \subseteq \bigcup _{k\ge 1} Q_{k}  \subseteq \bigcup _{k\ge 1} B(x_{k},\diam(Q_{k}))
 \subseteq \bigcup _{k\ge 1} B(x_{k},b {d(x_{k},F)}).
$$

Towards the proof of (ii), we claim
that for all $x\in O$,
 $$d(x,F)\le 3\max\{1,a\} d(x,\complement O').$$
To prove the claim, we fix $x\in O$ and pick an arbitrary $y\in \complement O'$. Setting $\e := \frac13 \min\{1,\frac1a\}$ we need to prove that
\begin{align*}
|x-y| \ge \e d(x,F).
\end{align*}
From $y\not\in O'$ we know that either $d(y,F)\ge 2am(y)$ or $d(y, F) = 0$.
In the latter case we have $y \in \overline{F}$, hence $\e d(x,F) \leq d(x,F) \leq |x-y|,$
so in what follows we may assume that $d(y,F)\ge 2am(y).$
From $x\in O$ we know that $d(x,F)\le am(x)$. Suppose, for a contradiction, that $|x-y| < \e d(x,F)$.
Then $|x-y|< \e am(x)$ and therefore $m(x)\le (1+\e a)m(y)$ by Lemma \ref{lem:admsym}(ii).
Also, for all $f\in F$ we have
$ |x-y|\ge |y-f| - |f-x| \ge 2am(y)- |f-x|$.
Minimising over $f$, this gives
$|x-y| \ge   2am(y) - d(x,F)$.
Since also $\e d(x,F) > |x-y|$, we find that
$am(y) < \frac 12(1+\e) d(x,F)\le \frac 12(1+\e) am(x)$. It follows that $m(y) < \tfrac12(1+\e) m(x),$
and in combination with the inequality $m(x)\le (1+\e a)m(y)$
we get
$$ 2 <  (1+\e)(1+\e a).$$
On the other hand, recalling that $\e = \frac13 \min\{1,\frac1{a}\}$
we see that $ (1+\e)(1+\e a) \le (1+\frac13)(1+\frac13) = \frac{16}{9} <2.$
This contradicts the previous inequality and the claim is proved.

Combining the estimate
\begin{align*}
 d(x_k, \complement O')
&  \leq  d(Q_k, \complement O') + \diam(Q_k)
  \leq \Big(1 + \frac4{\delta}\Big)  \diam(Q_k)
\end{align*}
with the claim, we obtain
\begin{align*}
 {d(x_{k},F)}
& \le 3\max\{1,a\}d(x_{k},\complement O')
\le 3 \Big(1 + \frac4{\delta}\Big) \max\{1,a\} \diam(Q_{k}).
\end{align*}
Recalling the inequality $|c_k-x_k|\le \frac14 d(x_k,F)$ proved before,
and then using the doubling property in combination with the above inequality,
we obtain
\begin{align*}
\sum_{k\ge 1} \gamma(B(x_{k},c d(x_{k},F))
&\leq \sum_{k\ge 1} \gamma(B(c_{k},(c+\tfrac{1}{4}) d(x_{k},F))
\\&\lesssim \sum_{k\ge 1} \gamma(B(c_{k},\diam(Q_{k}))
 \lesssim \sum_{k\ge 1} \gamma(Q_{k})
\leq \gamma(O').
\end{align*}

To finish the proof we will show that $\g(O')\lesssim \g(O)$
with a constant depending
only on $a$ and $n$.
Using the notation of Lemma \ref{lem:doubling-cubes} it suffices to show that
there exists a sequence of disjoint cubes
$(Q_i)_{i\ge 1} \subseteq \Delta^\g$ contained in $O$
such that $$O'\setminus O \subseteq \bigcup_{i\ge 1} M_{a,n}\circ Q_i$$
with $M_{a,n}$ depending only on $a$ and $n$.
Once this has been shown the claim follows from Lemma
\ref{lem:doubling-cubes}:
$$ \g(O'\setminus O)\le \sum_{i\ge 1} \g(M_{a,n}\circ Q_i)\lesssim \sum_{i\ge 1} \g(Q_i) =
\g\big(\bigcup_{i\ge 1} Q_i\big)
\le \g(O)$$
and consequently $\g(O')\lesssim \g(O)$.

Let $$O'' := \{ x\in \R^n:\ \tfrac13 am(x) < d(x,F) < \tfrac23 am(x)\}.$$
Every point $x\in O''$
belongs to some maximal cube $Q_x\in \Delta^\g$ entirely
contained in $O$. Since any two such maximal cubes are either equal or
disjoint, we may select
a sequence $(x_i)_{i\ge 1}$ in $O''$ such that the maximal cubes $Q_{x_i}\in
\Delta^\g$ are disjoint
and cover $O''$. We will show that these cubes have the desired property.

Fix $y\in O'\setminus O$. Then $d(y,F) = cm(y)$ for some $a\le c< 2a$.
Choose $f\in \overline F$ with $|f-y| = cm(y)$
(this is possible since $\overline F\cap\{z\colon |y-z|\le 2cm(y)\}$ is
compact and non-empty).
Choose $0<\lambda<1$ such that $g:= (1-\lambda)f+\lambda y$ belongs to $O''$.
Choose the index $i$ such that $g\in Q_{x_i}$.

Let $\beta \in (0,\tfrac13)$ be so small that $(\frac23 + \beta)(1+\beta a)<1$. We claim that $0 < d(z,F)< a m(z)$ whenever $|z-g| < \beta a m(g).$
To prove this, note that on the one hand,
\begin{align*}
 d(z, F)
  \geq d(g,F) - |z-g|
  \geq \tfrac13 a m(g) - \beta a m(g)
  > 0,
\end{align*}
while on the other hand, by Lemma \ref{lem:admsym},
\begin{align*}
 d(z, F)
  \leq d(g,F) + |z-g|
  \leq \tfrac23 a m(g) + \beta a m(g)
  \leq (\tfrac23 +\beta) (1 + \beta a) a m(z)
     < a m(z),
\end{align*}
which proves the claim.
It thus follows that $B(g, \beta a m(g))$ is contained in $O$. By maximality, this implies
that the diameter of $Q_{x_i}$ is at least $\frac12 \beta a m(g)$. Hence the side-length $l_i$ of $Q_{x_i}$ is at least $\frac1{2\sqrt n} \beta am(g)$. On the other hand we have
\begin{align*}
|g - y|
  = (1-\lambda) |f - y| < d(y,F) \le
  2am(y)\leq 2a (1+2a)m(g)
\end{align*}
by Lemma \ref{lem:admsym}.
For any $M>1$,  the cube $M\circ Q_{x_i}$ contains the ball
$B(g, \frac12 l_i (M-1))$, and therefore $y\in M\circ Q_{x_i}$ provided $ \frac12 l_i (M-1) \ge 2a(1+2a)m(g).$
This happens if we take $M = 1+8\frac{\sqrt n}{\beta}(1+2a)$, since then
\begin{align*}
\tfrac12 l_i (M-1)
  \geq \frac1{4\sqrt n} \beta am(g)\cdot 8\frac{\sqrt n}{\beta}(1+2a)
  = 2 a (1+2a) m(g).
\end{align*}
It thus follows that $M \circ Q_{x_i}$ contains $y$, which completes the proof.
\end{proof}

\section{Change of aperture for maximal functions}

In the proof of Theorem \ref{thm:main} we need a change of aperture
result
for the admissible cone appearing in the definition of non-tangential maximal
functions. For this purpose we define, for $\rr,\al>0$, the {\em non-tangential maximal function with parameters $\rr, \al $} by
$$
T^{*}_{(\rr, \al)}u(x):=\underset{(y,t)\in \Gamma ^{(\rr, \al)}_{x}(\gamma)}{\sup}
\Big( \frac{1}{\gamma(B(y,\rr t))} \int_{B(y,\rr t)}
|e^{-t^{2}L}u(z)|^{2}d\gamma(z) \Big)^{\frac{1}{2}},
$$
where
$$\Gamma^{(\rr,\al)} _{x}(\gamma) := \Big\{(y,t) \in \R^{n}\times(0,\infty)\colon
|y-x| < \rr t  \textrm{ and }  t < \al m(x)\Big\}$$
is the {\em admissible cone with parameters $\rr, \al$} based at the point
$x\in\R^n$.
The parameter $\rr$ is called the {\em aperture} of the cone.

In what follows we will fix the dimension $n$ and write
$L^p(\g):= L^p(\R^{n};\g)$.

\begin{theorem}[Change of aperture]
\label{thm:aperture}
For all $\rr, A', \al > 0$ there exists a constant $D$, depending only on $\rr$,  $A'$, $\al$,
and the dimension $n$, such that for all $u \in L^{1}(\gamma)$ and $\sigma>0$
we have
$$ \gamma\big(\big\{x \in \R^{n}\colon T^{*}_{(\rr,\al)}u(x)>D\sigma\big\}\big)
\lesssim \gamma\big(\big\{x \in \R^{n}:\
T^{*}_{(A',a')}u(x)>\sigma\big\}\big)
$$
with $a' = \al(1+2\al \rr)(1+ A'\al(1+2\al \rr))$
and with implied constant independent of $u$ and $\sigma$. In particular,
$$
\|T^{*} _{(\rr,\al)} u\|_{L^1(\g)} \lesssim
\|T^{*}_{(A',a')}u\|_{L^1(\g)}
$$
with implied constant independent of $u\in L^1(\gamma)$.
\end{theorem}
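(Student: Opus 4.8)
The plan is to prove the weak-type estimate first, since the $L^1$ bound then follows by the standard layer-cake formula $\|T^*_{(\rr,\al)}u\|_{L^1(\g)} = \int_0^\infty \g(\{T^*_{(\rr,\al)}u > s\})\,ds$, a change of variables $s = D\sigma$, and the weak-type inequality applied at level $\sigma$. So fix $\sigma>0$ and set $E_\sigma := \{x : T^*_{(A',a')}u(x) > \sigma\}$ and $E_\sigma^D := \{x : T^*_{(\rr,\al)}u(x) > D\sigma\}$; the goal is $\g(E_\sigma^D) \lesssim \g(E_\sigma)$ for a suitable large $D$. The first step is the elementary but crucial geometric observation: if $x\in E_\sigma^D$, there is $(y,t)\in\Gamma^{(\rr,\al)}_x(\g)$ realising the supremum up to a factor $2$, i.e.\ $|y-x|<\rr t$, $t<\al m(x)$, and $\gamma(B(y,\rr t))^{-1}\int_{B(y,\rr t)}|e^{-t^2L}u|^2\,d\g > \tfrac14 D^2\sigma^2$. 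The point of the change of aperture is that the same pair $(y,t)$ witnesses membership in $E_\sigma$ for \emph{every} point $x'$ close to $y$: indeed one checks, using Lemma~\ref{lem:admsym} and the arithmetic defining $a'$, that $B(y,\rr t)\subseteq B(y,A't')$ for an appropriate $t'$ and that $(y,t)\in\Gamma^{(A',a')}_{x'}(\g)$ whenever $|x'-y|< A't$ (say), so that $T^*_{(A',a')}u(x')\ge \gamma(B(y,\rr t))^{-1/2}(\int_{B(y,\rr t)}|e^{-t^2L}u|^2\,d\g)^{1/2} > \tfrac12 D\sigma$. Taking $D\ge 2$ this forces $x'\in E_\sigma$; hence a whole ball $B(y,\rr t)$ (or a fixed dilate thereof) around $y$ lies in $E_\sigma$, while $x$ itself lies in a comparable ball. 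The bookkeeping ensuring that the cone with parameters $(A',a')$ at $x'$ indeed contains $(y,t)$ — this is exactly where the condition $a' = \al(1+2\al\rr)(1+A'\al(1+2\al\rr))$ comes from, via the iterated application of Lemma~\ref{lem:admsym}(i)--(ii) passing from control at $x$ to control at $y$ to control at $x'$ — is the part one must do carefully.

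Granting that step, we have produced, for each $x\in E_\sigma^D$, an admissible ball $B_x := B(y_x, r_x)$ with $r_x \asymp \rr t_x \le c\,m(x)$, such that $x\in C_1\circ B_x$ for a dimensional constant $C_1$ and $B_x\subseteq E_\sigma$. Admissibility of $B_x$ follows from $t_x<\al m(x)$ together with Lemma~\ref{lem:admsym}. Now $E_\sigma^D$ splits: on the ``non-admissible part'' $\{x : d(x,F)> a\,m(x)\}$-type issues do not arise here because the cone truncation already forces $t_x\le \al m(x)$, so \emph{every} $x\in E_\sigma^D$ comes with an admissible $B_x$. The next step is therefore a Vitali-type selection: from $\{C_1\circ B_x : x\in E_\sigma^D\}$ extract a countable subfamily $\{C_1\circ B_{x_j}\}_j$ with $\{B_{x_j}\}_j$ pairwise disjoint and $E_\sigma^D\subseteq\bigcup_j C\circ B_{x_j}$ for a dimensional $C$. (Because the radii $r_x$ are bounded by $a\,m(x)\le a$, the classical $5r$-covering lemma applies directly; one does not even need the covering Lemma~\ref{lem:cover2} of Section~2 here, though that lemma plays a role elsewhere in the proof of Theorem~\ref{thm:main}.) Then, using the doubling property of admissible balls, Lemma~\ref{lem:doubling} (note $C\circ B_{x_j}$ and $B_{x_j}$ meet and the radius ratio is the fixed constant $C$, and $B_{x_j}\in\B_c$), we estimate
\[
\g(E_\sigma^D) \le \sum_j \g(C\circ B_{x_j}) \lesssim \sum_j \g(B_{x_j}) = \g\Big(\bigcup_j B_{x_j}\Big) \le \g(E_\sigma),
\]
the last inequality because the $B_{x_j}$ are disjoint and each is contained in $E_\sigma$. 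This is exactly the desired weak-type bound, with $D$ any constant $\ge 2$ and the implied constant absorbing the doubling constant $d_{c,C,n}$.

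The main obstacle is the purely geometric first step: tracking how the cone parameters transform when one moves the base point of the cone from $x$ to the interior point $y$ and then out again to a nearby point $x'$, while keeping every ball involved admissible so that Lemma~\ref{lem:admsym} can be invoked at each transfer. The aperture $\rr$ interacts with the height bound $t<\al m(x)$ through $|y-x|<\rr t<\rr\al m(x)$, which by Lemma~\ref{lem:admsym}(ii) gives $m(x)\le(1+\rr\al)m(y)$ and hence $t<\al(1+2\al\rr)m(y)$ (after the matching lower bound), and a second such step from $y$ to $x'$ produces the nested factor, explaining the precise form of $a'$. One must also verify that the enlarged ball $B(y,A't)\supseteq B(y,\rr t)$ — i.e.\ that $A'$ may be taken as the target aperture with no loss, which is immediate if $A'\ge\rr$, and if $A'<\rr$ one simply uses $B(y,\rr t)$ directly and the monotonicity of the averaged quantity in the radius combined with doubling. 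Everything after this geometric reduction is the standard doubling-plus-Vitali machinery, which goes through verbatim because all relevant balls have been arranged to be admissible.
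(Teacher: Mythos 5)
Your route is genuinely different from the paper's: the paper runs a good-$\lambda$ type argument, showing $\{T^{*}_{(\rr,\al)}u>D\sigma\}\subseteq\{M^{*}_{b}(1_{E_\sigma})>\tau\}$ and then invoking the weak type $(1,1)$ bound for the admissible maximal operator (Lemma \ref{lem:maximal}), whereas you propose a direct Vitali covering of the level set by admissible balls contained in $E_\sigma$, using that admissible radii are uniformly bounded (by $A'\al$) so the classical $5r$-lemma applies, and then Lemma \ref{lem:doubling} on the dilates. That skeleton is attractive and, in principle, workable: your transfer of the height bound from $x$ to $y$ to $x'$ via Lemma \ref{lem:admsym} is exactly how $a'$ arises, and the final disjointness-plus-doubling step is fine since every ball involved is admissible with a fixed parameter. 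However, there is a genuine gap precisely in the only nontrivial case, $A'<\rr$ (when $A'\ge \rr$ the statement is essentially a pointwise domination up to a doubling constant). Your key inequality $T^{*}_{(A',a')}u(x')\ge \big(\gamma(B(y,\rr t))^{-1}\int_{B(y,\rr t)}|e^{-t^{2}L}u|^{2}\,d\gamma\big)^{1/2}$ is simply not available then: the supremum defining $T^{*}_{(A',a')}$ only sees averages over balls of radius $A's$, and averages are not monotone in the radius --- doubling compares the measures $\gamma(B(y,A't))$ and $\gamma(B(y,\rr t))$, but the mass of $|e^{-t^{2}L}u|^{2}$ may be concentrated in $B(y,\rr t)\setminus B(y,A't)$, so the small-ball average can be far smaller than the large-ball one. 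Nor can you rescale the height to $s=\rr t/A'$ to make $B(y,A's)=B(y,\rr t)$: that changes the semigroup time ($e^{-s^{2}L}u\neq e^{-t^{2}L}u$), and the constraint $s<a'm(x')$ may fail as well. So "use $B(y,\rr t)$ directly and the monotonicity of the averaged quantity in the radius combined with doubling" is not a proof step; it is exactly the point where an idea is missing.

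The missing ingredient is the finite covering/pigeonhole device (this is Step 3 of the paper's proof): cover $B(y,\rr t)$ by $N=N(\rr,A',n)$ balls $B(v_{i},A't)$ with centres $v_{i}\in B(y,\rr t)$; then some $i$ satisfies $\int_{B(v_{i},A't)}|e^{-t^{2}L}u|^{2}\,d\gamma\ge N^{-1}\int_{B(y,\rr t)}|e^{-t^{2}L}u|^{2}\,d\gamma$, and since $\gamma(B(v_{i},A't))\lesssim\gamma(B(y,\rr t))$ by admissible doubling, the average over $B(v_{i},A't)$ is $\gtrsim D^{2}\sigma^{2}$. With your Lemma \ref{lem:admsym} bookkeeping (note $|x-v_{i}|<2\rr t$, which is where the factor $1+2\al\rr$ in $a'$ comes from) every $x'\in B(v_{i},A't)$ then lies in $E_\sigma$, and $x$ lies in a fixed dilate of $B(v_{i},A't)$, after which your Vitali argument closes and yields the weak-type bound directly, bypassing Lemma \ref{lem:maximal}. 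Two minor corrections for the write-up: the doubling and pigeonhole losses must be absorbed into $D$ (which may depend on $\rr, A', \al, n$), not into the implied constant of the weak-type inequality, since they occur in the lower bound for $T^{*}_{(A',a')}u(x')$; and it is the selected \emph{dilated} balls that the $5r$-lemma makes pairwise disjoint, from which disjointness of the inner balls $B_{x_j}\subseteq E_\sigma$ follows.
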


The proof of this theorem follows known arguments in the
euclidean case \cite{fs}. We begin with a gaussian weak type $(1,1)$ estimate
from \cite{MM}. For the
convenience of the reader we include an alternative and self-contained
proof.
\begin{lemma}\label{lem:maximal}
Let $a > 0$. For $f\in L_{\rm loc}^1(\R^n)$ put
 $$
 M_a^{*}f(x) := \underset{B(x,r) \in
\B _{a}}{\sup}\frac{1}{\gamma(B(x,r))} \int
_{B(x,r)}|f(y)|\,d\gamma(y).
 $$
Then for all $\tau >0$,
$$\tau \gamma(\{M_a^*(f)> \tau \})
\lesssim \n f\n_{L^1(\g)}$$
with  implied constant only depending
on $a$ and $n$.
\end{lemma}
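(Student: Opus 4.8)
**The plan is to prove the weak type $(1,1)$ bound for $M_a^*$ by a Vitali-type covering argument, using the doubling property of admissible balls to replace the standard covering lemma.** The obstruction, of course, is that the gaussian measure is not globally doubling, so the classical Hardy--Littlewood argument does not apply directly; but since $M_a^*$ only involves averages over admissible balls $B(x,r)\in\B_a$, we are always in a regime where doubling holds (Lemma~\ref{lem:doubling}). Concretely, fix $\tau>0$ and set $E_\tau := \{M_a^*f>\tau\}$. For each $x\in E_\tau$ there is an admissible ball $B_x = B(x,r_x)\in\B_a$ with
\begin{align*}
  \frac{1}{\gamma(B_x)}\int_{B_x}|f|\,d\gamma > \tau,
  \qquad\text{i.e.}\qquad \gamma(B_x) < \frac1\tau\int_{B_x}|f|\,d\gamma.
\end{align*}
The radii $r_x$ are uniformly bounded (by $a$), so I can apply the basic Vitali covering lemma (the $5r$-covering lemma, which is purely metric and needs no measure-theoretic hypotheses) to the family $\{B_x : x\in E'\}$, where $E'$ is any bounded piece of $E_\tau$ that I want to estimate, extracting a countable disjoint subfamily $(B_{x_k})_k = (B(x_k,r_k))_k$ with $E'\subseteq\bigcup_k B(x_k,5r_k)$.

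**Next I would invoke doubling to compare $\gamma(B(x_k,5r_k))$ with $\gamma(B(x_k,r_k))$.** Since $B(x_k,r_k)\in\B_a$, Lemma~\ref{lem:doubling} (applied with $\tau=5$, comparing $B_2 = B(x_k,5r_k)$ to $B_1 = B(x_k,r_k)$, which have non-empty intersection and satisfy $r_2 = 5 r_1$) gives a constant $d = d_{a,5,n}$ with $\gamma(B(x_k,5r_k)) \le d\,\gamma(B(x_k,r_k))$. Combining this with the disjointness and the average inequality,
\begin{align*}
  \gamma(E') \le \sum_k \gamma(B(x_k,5r_k))
    \le d\sum_k\gamma(B(x_k,r_k))
    \le \frac{d}{\tau}\sum_k\int_{B(x_k,r_k)}|f|\,d\gamma
    \le \frac{d}{\tau}\int_{\R^n}|f|\,d\gamma,
\end{align*}
where the last step uses disjointness of the $B(x_k,r_k)$. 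Since the bound is independent of the bounded piece $E'$, letting $E'\uparrow E_\tau$ (e.g. through $E_\tau\cap B(0,R)$ as $R\to\infty$) yields $\tau\,\gamma(E_\tau)\le d\,\|f\|_{L^1(\gamma)}$, which is the claim with implied constant $d = d_{a,5,n}$ depending only on $a$ and $n$.

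**The main point to be careful about** is that the Vitali $5r$-lemma in its simplest form requires the radii to be bounded (so that a maximal disjoint subfamily can be chosen greedily from the largest radii down), which is exactly guaranteed here by $r_x \le a\,m(x)\le a$; no doubling is needed for the covering step itself. One could alternatively use the Besicovitch covering theorem to avoid the factor $5$ and even avoid passing to a disjoint subfamily, but the $5r$-lemma is cleaner and entirely sufficient. A minor technical point is measurability of $E_\tau$: since for each fixed admissible ball the average is a continuous function of the centre and radius, $M_a^*f$ is lower semicontinuous, hence $E_\tau$ is open and in particular measurable, and the exhaustion by $E_\tau\cap B(0,R)$ is legitimate. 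I expect no serious obstacle beyond bookkeeping; the whole content is the reduction to admissible balls plus Lemma~\ref{lem:doubling}.
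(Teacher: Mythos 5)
Your proof is correct, but it follows a genuinely different route from the paper's. You run the classical Vitali argument directly for the gaussian measure: select for each $x\in\{M_a^*f>\tau\}$ an admissible ball with large average, extract a disjoint subfamily by the $5r$-covering lemma (legitimate, since admissibility forces all radii $\le a$ and the lemma is purely metric), and then control the $5$-dilates by Lemma \ref{lem:doubling}, which applies because the selected balls $B(x_k,r_k)$ lie in $\B_a$ and the dilate has the same centre with radius ratio $5$; the resulting constant $d_{a,5,n}$ depends only on $a$ and $n$, and your lower-semicontinuity remark (the averages are continuous in the centre for fixed radius, and continuity in the radius handles the endpoint $r=am(x)$) settles measurability. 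The paper instead avoids any covering lemma: it decomposes $f=\sum_{Q\in\Delta_0^\gamma}1_Qf$ over the gaussian dyadic cubes, uses Lemma \ref{lem:cube-ball} and Lemma \ref{lem:admsym} to show that any admissible ball meeting a fixed cube $Q$ stays within distance $\lesssim m(c_Q)$ of $c_Q$, so that the gaussian density is comparable to $e^{-\frac12|c_Q|^2}$ there; this reduces $M_a^*(1_Qf)$ pointwise to the euclidean Hardy--Littlewood maximal function, whose classical weak $(1,1)$ bound is then invoked, and the cube contributions are summed using a bounded-overlap count. Your argument is shorter and self-contained modulo Lemma \ref{lem:doubling}, and it makes transparent that admissible doubling is the only measure-theoretic input; the paper's argument buys a reusable localization principle (near-constancy of the gaussian density at admissible scales, transfer to the euclidean operator) that fits the dyadic machinery developed in Section 2 and used elsewhere in the paper. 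Both yield the stated constant depending only on $a$ and $n$.
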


\begin{proof}
Fix $f\in L_{\rm loc}^1(\R^n)$ and
decompose it as $ f =\sum_{Q\in \Delta_0^\g} 1_Q f$.
We denote by $c_Q$ the centre of a cube $Q$.
The idea of this proof is that the gaussian density is essentially equal to $e^{-\frac12|c_{Q}|^{2}}$ on an admissible ball $B(c_{Q},r_{Q})$ and the support of
$M_{A}^{*}(1_{Q}f)$ is included in such admissible balls.

To make this precise, consider a cube $Q\in \Delta_{0}^{\gamma}$, and suppose that a ball $B(x,r)\in \B_{a}$ intersects $Q$.
Then Lemma \ref{lem:cube-ball} implies that $r \leq  2a(a+n)m(c_{Q}).$ 
As a consequence, for any $y \in B(x,r)$, we use the triangle inequality and \eqref{eq:diamQ} to obtain
\begin{align*}
|y-c_{Q}| \leq 2r + \tfrac12\diam(Q) 
\leq  (4a(a+n) + \tfrac12 n)  m(c_{Q})
  =: b_{a,n} m(c_Q),
\end{align*}
and thus
\begin{align*}
 \big| |c_Q|^2 - |y|^2 \big|
    & \leq  | c_Q + y |\, |c_Q - y|
    \leq \big(2 |c_Q| + | c_Q - y |\big) |c_Q - y|
      \leq  2  b_{a,n}  + b_{a,n}^2.
\end{align*}
This inequality implies that
\begin{align*}
  e^{-\tfrac12|y|^2}    \eqsim e^{-\tfrac12|c_Q|^2}
\end{align*}
with implied constants depending only on $a$ and $n$.

Using this estimate we obtain
\begin{equation}
\label{eq:hl}
\frac{1}{\gamma(B(x,r))} \int  _{B(x,r)} |1_{Q}(y)f(y)| d\gamma(y) \lesssim \frac{1}{|B(x,r)|} \int  _{B(x,r)}|1_{Q}(y)f(y)| dy,
\end{equation}
where $|B(x,r)|$ denotes the Lebesgue measure of the ball; the constants depend only on $a$ and $n$.

Next we note that if $M^{*}_{a}(1_{Q}f)(x)>0$, then
there exists a ball $B \in \B_a$ such that $B \cap Q \neq \emptyset$.
Using (\ref{eq:hl}), we thus have, for all $\tau>0$,
\begin{align*}
\gamma(\{M^{*}_{a}(1_{Q}f)>\tau\})
 \eqsim  \int _{\{M^{*}_{a}(1_{Q}f)>\tau\} } e^{-\tfrac12 |x|^{2}}dx
 & \lesssim  e^{-\tfrac12 |c_{Q}|^{2}}|\{M^{*}_{a}(1_{Q}f)>\tau\}|
\\ & \lesssim e^{-\tfrac12 |c_{Q}|^{2}}|\{ M_{HL}(1_{Q}f) >\tau\}|,
\end{align*}
where $M_{HL}$ denotes the euclidean Hardy-Littlewood maximal operator.
Using the weak type $(1,1)$ bound for the latter, we get
$$
\tau\gamma(\{M^{*}_{a}(1_{Q}f)>\tau\}) \lesssim e^{-\tfrac12 |c_{Q}|^{2}} \int  _{Q} |f(y)|dy \lesssim \|1_{Q}f\|_{L^{1}(\gamma)},
$$
with constants depending only on $a$ and $n$.

Now fix a cube $Q \in \Delta_{0}^{\gamma}$ with centre $c_{Q}$. 
For $x \in Q$ we have $|x - c_Q| \leq \frac12 \diam(Q) \leq \frac12 nm(c_Q)$ by \eqref{eq:diamQ}, and therefore by the second part of Lemma \ref{lem:admsym}(ii), $m(x) \leq 2 (1 + \frac12 n)m(c_Q)$. Hence if $B(x,r) \in \mathcal{B}_a$ and $x\in Q$, then $r \leq a m(x) \leq (2+n)am(c_{Q})$. Thus
\begin{align*}
B(x,r) \subseteq \bigcup _{l=1} ^{N_Q} Q^{{(l)}},
\end{align*}
where we denote by $Q^{{(l)}}$, $l=1,...,N_Q$, the cubes from $\Delta_{0}^{\gamma}$ that satisfy $d(Q,Q^{{(l)}}) \leq (2+n)am(c_{Q})$. Remark that $N_Q\le N$, where $N=N_{a,n}$ only depends on $a$ and $n$.  

It follows from the preceding considerations that
\begin{align*}
M_{a}^{*}f(x) \leq M_{a}^{*}\big(\sum  _{l=1} ^{N_Q} 1_{Q^{{(l)}}}f\big)(x)
\leq \sum  _{l=1} ^{N_Q} M_{a}^{*}(1_{Q^{{(l)}}}f)(x)
\end{align*}
for $x \in Q$, and thus
\begin{equation*}
\begin{split}
\gamma(\{M_{a}^{*}f>\tau\})& = \sum \limits _{Q \in \Delta_{0}^{\gamma}} \gamma(\{M_{a}^{*}f>\tau\}\cap Q)
\leq \sum \limits _{Q \in \Delta_{0}^{\gamma}} \gamma\Big(\Big\{\sum \limits _{l=1} ^{N_Q} M_{a}^{*}(1_{Q^{{(l)}}}f)>\tau\Big\}\Big)\\
& \leq \sum \limits _{Q \in \Delta_{0}^{\gamma}} \sum \limits _{l=1} ^{N_Q}  \gamma\Big(\Big\{M_{a}^{*}(1_{Q^{{(l)}}}f)>\frac{\tau}{N_Q}\Big\}\Big)
\lesssim  \sum \limits _{Q \in \Delta_{0}^{\gamma}}  \frac{N_Q}{\tau}  \sum \limits _{l=1} ^{N_Q}\|1_{Q^{{(l)}}}f\|_{L^{1}(\gamma)}
\\ & \lesssim \frac{N N'}{\tau} \|f\|_{L^{1}(\gamma)},
\end{split}\end{equation*}
with implied constant depending only on $a$ and $n$; the  $N'$ in the last inequality accounts for the fact that, 
given $Q' \in \Delta_{0} ^{\gamma}$, there are at most $N'$ cubes $Q \in \Delta_{0} ^{\gamma}$ such that $\dist(Q,Q') \leq (2+n)am(c_{Q})$,
where again $N'$ depends only on $a$ and $n$.
\end{proof}

\begin{proof}[Proof of Theorem \ref{thm:aperture}]
It suffices to prove the inequality for test functions $u\in C_{\rm c}(\R^n)$. For the rest of
the proof we fix $u\in C_{\rm c}(\R^n)$.
Using the doubling property on admissible balls, we fix a constant $\tau >0$ such that
$$\gamma(B(y,(A'+4 \rr)t)) < \frac{1}{\tau }
\gamma(B(y, A't))\quad \forall B(y,t) \in \B _{c_{\al,2\rr}},$$
where $c_{\al,2\rr} =(1+2\al \rr)\al$ is the constant arising from
Lemma \ref{lem:admsym}(i).
For $\sigma>0$ we define
\begin{align*}
E_{\sigma}     & := \{x \in \R^{n}\colon  T^{*}_{(A',a')}u(x)>\sigma\}, \\
\widetilde{E_{\sigma}} & := \{x \in \R^{n}\colon
M_{b}^{*}(1_{E_{\sigma}})(x)>\tau \},
\end{align*} where
$ M_{b}^{*}f$ is defined as in the lemma and  $b := (A'+2\rr)\al.$
The scheme of the proof is the following. We first prove (step 1) that, if
$ x \not \in \tilde{E}_{\sigma}$ and $(y,t) \in \Gamma_{x}^{(2A,a)}(\gamma)$, then $B(y,A't) \not \subseteq E_{\sigma}$.
We then use this fact (step 2) to prove that
\begin{equation*}\frac{1}{\gamma(B(y,A't))}
\int  _{B(y,A't)} |e^{-t^{2}L}u(\zeta)|^{2}\, d\gamma(\zeta) \leq
\sigma^{2},
\end{equation*}
for all $(y,t) \in \Gamma^{(2\rr,\al)}_{x}(\gamma)$ with $x\not\in \widetilde{E_\sigma}$.
This eventually gives (step 3) that there exists $D=D_{A,A',a,n}>0$ such that
$\{x \in \R^{n} \;;\; T^{*}_{(A,a)}u(x)>D\sigma\} \subseteq \widetilde{E}_{\sigma}$.
The proof is then concluded using Lemma \ref{lem:maximal} applied to
$1_{E_\sigma}$.
In the estimates that follow, the implicit constants are independent of $u$ and $\sigma$.

Throughout steps 1--3 below, we fix a point $x \not \in \widetilde{E_{\sigma}}$ and a point $(y,t) \in \Gamma^{(2\rr,\al)}_{x}(\gamma).$

{\em Step 1} -- We claim that
$B(y,A't) \not \subseteq E_{\sigma}$. To prove this, first note that
from $|x-y|\le 2\rr t$ we have
$$B(y,A't)\subseteq B(x,(A'+2\rr)t)\subseteq
B(y,(A'+4\rr)t).$$
Furthermore, $t\le \al m(x),$
and therefore $B(x,(A'+2\rr)t)\in\mathscr{B}_{(A'+2\rr)\al}
= \mathscr{B}_{b}$.
If we now assume that the claim is false, we get
\begin{align*}
M^{*}_{b}(1_{E_{\sigma}})(x)
&=  \underset{B(x,r) \in \B _{b}}{\sup}\frac{\g(B(x,r)\cap
E_{\sigma})}{\gamma(B(x,r))}
\\ & \ge \underset{B(x,r) \in
\B _{b}}{\sup}\frac{\g(B(x,r)\cap B(y, A't))}{\gamma(B(x,r))}
\\ & \geq \frac{\g(B(x,(A'+2\rr)t)\cap B(y, A't))}{\gamma(B(x,(A'+2\rr)t))}
\\ & = \frac{\gamma(B(y, A't))}{\gamma(B(x,(A'+2 \rr)t))}
\\ & \geq \frac{\gamma(B(y, A't))}{\gamma(B(y,(A'+4\rr)t))}
\\ &  > \tau ,
\end{align*}
where the second inequality uses that $B(x,(A'+2A)t)\in \B _{b}$
and the last one follows from the definition of the constant $\tau $ and
the observation that $B(y,t)\in
\mathscr{B}_{c_{\al,2\rr}}$ by Lemma \ref{lem:admsym}(i), using that
$B(x,t)\in \mathscr{B}_{\al}$ and $|x-y|\le 2\rr t$.
This contradicts the fact that $x\not\in \widetilde{E_\sigma}$ and the claim is
proved.

{\em Step 2} -- Since $B(y, A't) \not \subseteq E_{\sigma}$, there exists $\tilde{y} \in
B(y, A't)$ such that $\tilde y\not\in E_\sigma$, that is,
\begin{align}\label{eq:tilde-y}
\underset{(z,s)\in\Gamma^{(A',a')}_{\tilde{y}}(\gamma)}{\sup}
\frac{1}{\gamma(B(z,A's))}
\int  _{B(z,A's)} |e^{-s^{2}L}u(\zeta)|^{2}\, d\gamma(\zeta) \leq
\sigma^{2}.
\end{align}
Remark also that, since $B(y,t)\in\B _{c_{a,2A}}$ and thus $t \leq c_{\al,2\rr} m(y)$,
Lemma \ref{lem:admsym} implies that $t \leq a' m(\tilde{y})$.
Thus $(y,t)\in \Gamma^{(A',a')}_{\tilde{y}}(\gamma)$ and therefore (\ref{eq:tilde-y}) implies
\begin{equation}\label{eq:est-zeta}
\frac{1}{\gamma(B(y,A't))}
\int  _{B(y,A't)} |e^{-t^{2}L}u(\zeta)|^{2}\, d\gamma(\zeta) \leq
\sigma^{2}.
\end{equation}

{\em Step 3} -- Next let $(w,s)\in \Gamma_x^{(\rr,\al)}(\g)$ be arbitrary and fixed
for the moment.
Then $w \in B(x,\rr s)$. For any $v\in B(w,\rr s)$
we have $|v-x|\le |v-w|+|w-x|\le 2\rr s$. Since also
$s \le \al  m(x)$, it follows that $(v,s) \in \Gamma
^{(2\rr,\al)}_{x}(\gamma)$.
Also, since $|v-w|\le \rr s$
implies  $B(v, A's)\subseteq B(w,(A'+\rr)s)$, we have
$$\g(B(v, A's))\le \g(B(w,(A'+\rr)s))\lesssim \g(B(w,\rr s))$$ by the doubling property for admissible balls; the balls $B(w,\rr s)$ are indeed
admissible by Lemma \ref{lem:admsym}(i).

We can cover ${B(w,\rr s)}$ with finitely many balls of the form $B(v_{i}, A's)$ with $v_i\in B(w,\rr s)$;
this can be achieved with $N =N(\rr,A',n)$ balls. We then have, by
\eqref{eq:est-zeta},
\begin{align*}
\ & \frac{1}{\gamma(B(w,\rr s))} \int  _{B(w,\rr s)} |e^{-s^{2}L}u(z)|^{2}
\,d\gamma(z)
\\ & \qquad \lesssim \sum  _{i=1}^N \frac{1}{\gamma(B(v_{i},A's))} \int
_{B(v_{i},A's)} |e^{-s^{2}L}u(z)|^{2} d\gamma(z) \lesssim \sigma^{2}.
\end{align*}
Taking the supremum over all $(w,s)\in \Gamma_x^{(\rr,\al)}(\g)$, we infer that there exists a constant $D>0$, depending only on
$\rr$, $A'$, $\al$, and the dimension $n$, such that $T^{*}_{(\rr,\al)}u(x) \leq D\sigma$ for all $x \not \in \widetilde{E_{\sigma}}$.

We have now shown that
$ \{T^{*}_{(\rr,\al)}u(x) > D\sigma\} \subseteq \widetilde{E_\sigma}$.
The first assertion of the theorem follows from this via Lemma \ref{lem:maximal}.
The second assertion follows from the first by integration:
$$\begin{aligned}
\|T^{*}_{(\rr,\al)}u\|_{L^1(\g)} & =D \int  _{0} ^{\infty}\gamma(\{x \in \R^{n}\colon
T^{*}_{(\rr,\al)}u(x)>D\sigma\})\,d\sigma
\\ & \lesssim  \int  _{0} ^{\infty}\gamma(\widetilde{E_{\sigma}})\, d\sigma
\lesssim
\int  _{0} ^{\infty}\gamma(E_{\sigma})\,d\sigma
=\|T^{*}_{(A',a')}u\|_{L^1(\g)}.
\end{aligned}
$$
Since the choice of $\rr,A',\al \ge 0$ was arbitrary, this concludes the proof.
\end{proof}

\section{Proof of Theorem \ref{thm:main}}\label{sec:max-quad}

In this section we follow the method pioneered in \cite{fs} for proving square
function estimates in Hardy spaces. This method has recently been adapted in a
variety of contexts (see \cite{amr,ar,hm}). Here, we modify the version given in
\cite{hm} to avoid using the doubling property on non-admissible balls, and to
take into account differences between the Laplace and the Ornstein-Uhlenbeck operators.
As a typical example of the latter phenomenon, we start by proving a gaussian version of the parabolic Cacciopoli inequality.
Recall that $L$ is the Ornstein-Uhlenbeck operator, defined for $f\in C_{\rm b}^2(\R^n)$ by \begin{equation}\label{eq:L}
 Lf(x) =-\Delta f(x)+x\cdot \nabla f(x).
\end{equation}
Note that, for all $f,g \in C_{\rm b}^2(\R^n),$ one has the integration by parts formula
 \begin{align*}
  \int_{\R^n} L f \cdot g \, d\gamma
    =  \int_{\R^n} \nabla f \cdot \nabla g \, d\gamma
 \end{align*}

\begin{lemma}
\label{lem:caccio}
Let $v: \R^{n} \times (0,\infty) \to \C$ be a $C^{1,2}$-function such that $v(\cdot ,t) \in C_{\rm b}^2(\R^n)$ for all $t > 0,$ and suppose that
$$\partial_{t}v+Lv=0$$
on $I(x_{0},t_{0},2r) := B(x_{0},2cr) \times
[t_{0}-4r^{2},t_{0}+4r^{2}]$ for
 some $r \in (0,1),$ $0 < C_0 \leq c \leq C_1 < \infty,$ and $t_{0}>4r^{2}$.
Then
$$
\int  _{I(x_{0},t_{0},r)} |\nabla v(x,t)|^{2}\,d\gamma(x)\,dt \lesssim \frac{1 +
r|x_{0}|}{r^{2}}
\int  _{I(x_{0},t_{0},2r)} |v(x,t)|^{2}\,d\gamma(x)\,dt,
$$
with implied constant depending only on the dimension $n,$ $C_0$ and $C_1$.
\end{lemma}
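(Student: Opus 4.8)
The plan is to follow the classical Caccioppoli argument: multiply the equation $\partial_t v + Lv = 0$ by $\eta^2 \bar v$ for a suitable cutoff $\eta$, integrate over $\R^n$ against $d\gamma$, integrate by parts using the formula $\int Lf\cdot g\,d\gamma = \int \nabla f\cdot\nabla g\,d\gamma$ recalled just before the lemma, and then integrate in $t$ against a second cutoff. The key point specific to the gaussian setting is that the integration-by-parts formula for $L$ produces no first-order drift term — that is precisely why $L$ (rather than a generic second-order operator with the drift written out) is the convenient object here — so the computation is formally identical to the heat-equation case, with $dx$ replaced by $d\gamma$. First I would take $\eta \in C_{\rm c}^\infty(B(x_0, 2cr))$ with $\eta \equiv 1$ on $B(x_0, cr)$, $0\le\eta\le1$, and $|\nabla\eta|\lesssim 1/(cr)\lesssim 1/r$ (using $c\ge C_0$), together with a temporal cutoff $\chi \in C_{\rm c}^\infty((t_0-4r^2, t_0+4r^2))$ with $\chi\equiv 1$ on $[t_0-r^2, t_0+r^2]$ and $|\chi'|\lesssim 1/r^2$.

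The core computation: from $\partial_t v = -Lv$, for fixed $t$,
\begin{align*}
  \tfrac12 \partial_t \int_{\R^n} \eta^2 |v|^2\,d\gamma
  = \Re \int_{\R^n} \eta^2 \bar v\, \partial_t v\,d\gamma
  = -\Re \int_{\R^n} \eta^2 \bar v\, Lv\,d\gamma
  = -\Re \int_{\R^n} \nabla(\eta^2 \bar v)\cdot\nabla v\,d\gamma.
\end{align*}
Expanding $\nabla(\eta^2\bar v) = \eta^2 \nabla\bar v + 2\eta(\nabla\eta)\bar v$ and using the Cauchy–Schwarz and Young inequalities to absorb the cross term $\lesssim \int \eta|\nabla\eta|\,|v|\,|\nabla v|\,d\gamma$ into $\tfrac12\int\eta^2|\nabla v|^2\,d\gamma$, one obtains
\begin{align*}
  \partial_t \int_{\R^n}\eta^2|v|^2\,d\gamma + \int_{\R^n}\eta^2|\nabla v|^2\,d\gamma
  \lesssim \int_{\R^n}|\nabla\eta|^2|v|^2\,d\gamma \lesssim \frac{1}{r^2}\int_{B(x_0,2cr)}|v|^2\,d\gamma.
\end{align*}
Multiplying by $\chi(t)$, integrating in $t$ over the whole time interval, and integrating the term $\chi\,\partial_t\int\eta^2|v|^2\,d\gamma$ by parts in $t$ (the boundary terms vanish since $\chi$ is compactly supported) moves a $\chi'$ onto the $|v|^2$ integral; since $|\chi'|\lesssim 1/r^2$ and $\chi,\eta\equiv1$ on $I(x_0,t_0,r)$ while $\chi,\eta$ are supported in $I(x_0,t_0,2r)$, this yields
\begin{align*}
  \int_{I(x_0,t_0,r)}|\nabla v|^2\,d\gamma\,dt \lesssim \frac{1}{r^2}\int_{I(x_0,t_0,2r)}|v|^2\,d\gamma\,dt,
\end{align*}
with constant depending only on $n$, $C_0$, $C_1$.

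The one genuinely gaussian subtlety, and where the extra factor $1+r|x_0|$ enters, is that the spatial cutoff $\eta$ has gradient supported in the annulus $B(x_0,2cr)\setminus B(x_0,cr)$, so the right-hand side above should really read $\frac{1}{r^2}\int_{I}|v|^2\,d\gamma\,dt$ with $d\gamma$ already built in — but to compare constants uniformly one must control the oscillation of the gaussian density across the ball $B(x_0,2cr)$. Since $r<1$ and $c\le C_1$, for $x\in B(x_0,2cr)$ we have $\big||x|^2-|x_0|^2\big|\le (2|x_0|+2cr)(2cr)\lesssim r|x_0| + r^2 \lesssim 1+r|x_0|$, so $e^{-|x|^2/2}\eqsim e^{-|x_0|^2/2}$ only up to a multiplicative factor $e^{O(1+r|x_0|)}$ — which is not bounded. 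The clean way around this, and the step I expect to be the main (if modest) obstacle, is to \emph{not} replace $d\gamma$ by a constant at all on the left: instead note that where $\nabla\eta\ne 0$ one still has $d\gamma(x)\le e^{cr|x_0|+\text{const}}\,d\gamma$ relative to its values on $B(x_0,cr)$... Actually the correct and simplest route: keep the $d\gamma$ throughout as above, obtaining the clean inequality with implied constant depending only on $n$; the factor $1+r|x_0|$ in the statement is then a (non-optimal but harmless) upper bound inserted because $\frac{1+r|x_0|}{r^2}\ge\frac{1}{r^2}$, giving extra room that is convenient in the application in Section~4. So in fact the bound as stated follows a fortiori from the cleaner $\frac{1}{r^2}$ bound, and the whole proof reduces to the standard Caccioppoli computation above carried out with $d\gamma$ in place of Lebesgue measure, the only gaussian input being the drift-free integration-by-parts identity for $L$.
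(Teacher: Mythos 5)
Your argument is correct, but it takes a genuinely different route from the paper's proof, and in fact it proves a stronger statement. The paper integrates by parts in the opposite direction: it writes $\int|\nabla(v\eta)|^2\,d\gamma=\int v\eta\,L(v\eta)\,d\gamma$ for a space--time cutoff $\eta$, so the Ornstein--Uhlenbeck operator ends up acting on the cutoff; since $L\eta=-\Delta\eta+x\cdot\nabla\eta$ contains the drift, one only has $\|L\eta\|_\infty\lesssim(1+r|x_0|)/r^2$ on $B(x_0,2cr)$, and this is exactly where the factor $1+r|x_0|$ in the statement originates. In your version the cutoff is only ever hit by $\nabla$: the cross term $2\int\eta\bar v\,\nabla\eta\cdot\nabla v\,d\gamma$ is absorbed by Young's inequality into the left-hand side, the drift never acts on $\eta$, and you obtain the Caccioppoli inequality with the cleaner constant $1/r^2$, from which the stated bound follows a fortiori since $1+r|x_0|\ge 1$. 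This works precisely because $L$ is in divergence form with respect to $\gamma$ (the identity $\int Lf\cdot g\,d\gamma=\int\nabla f\cdot\nabla g\,d\gamma$ quoted before the lemma), so the classical energy argument transfers verbatim with $dx$ replaced by $d\gamma$, as you observe; the detour in your last paragraph about comparing $e^{-|x|^2/2}$ with $e^{-|x_0|^2/2}$ across the ball is a red herring that you correctly discard, since no freezing of the gaussian density is needed anywhere. The only cost of the paper's route is the extra factor $1+r|x_0|$, which the authors must (and do) neutralise later, e.g. in the estimates of $I_2$, $I_4$, $I_6$, using that $r$ is controlled by $m$ at the relevant points; your sharper form would make those steps marginally simpler. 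When writing this up, do record the routine justifications: $\eta^2\bar v(\cdot,t)\in C^2_{\rm b}(\R^n)$ so the integration-by-parts identity applies, differentiation under the integral sign is legitimate because the spatial integral is over the compact support of $\eta$, and the substitution $\partial_t v=-Lv$ is used only where $\eta\chi\neq 0$, i.e. inside $I(x_0,t_0,2r)$ where the equation is assumed to hold.
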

\begin{proof}
Let $\eta \in C^{\infty}(\R^{n}\times (0,\infty))$ be a cut-off function such
that $0\le \eta\le 1$ on $\R^n\times(0,\infty)$, $\eta\equiv 1$ on
$I(x_{0},t_{0},r)$,
$\eta\equiv 0$ on the complement of $I(x_{0},t_{0},2r)$, and
$$\|\nabla \eta\|_{\infty} \lesssim \frac{1}{r},\quad
\|\partial_{t} \eta\|_{\infty} \lesssim \frac{1}{r^{2}}, \quad
\|\Delta \eta\|_\infty \lesssim \frac1{r^2}$$
with implied constants depending only on $n$, $C_0$, $C_1$. Then,
in view of
  $\n x\cdot \nabla \eta\n_\infty \lesssim (|x_0|+2r)\cdot \frac{1}{r}$
and recalling that $0<r<1$,
\beq\label{eq:Leta}
\|L\eta\|_{\infty}
 \lesssim \frac{1}{r^{2}} + \frac{1}{r} |x_0| + 1
 \lesssim \frac{1 + r|x_{0}|}{r^{2}},
\eeq
where the implied constants depend only on $n,$ $C_0$, $C_1.$

Considering real and imaginary parts separately, we may assume that all
functions are real-valued.
Integrating the identity
$$(\eta\nabla v)\cdot(\eta\nabla v) = (v\nabla \eta-\nabla (v\eta))\cdot
(v\nabla \eta-\nabla (v\eta))$$ and then
using that
\begin{align*}
\int_{I(x_{0},t_{0},2r)}\eta^2 \nabla(v\eta)\cdot \nabla(v\eta)\,d\g\,dt
& \le \int  _{0} ^{\infty} \int_{\R^d}\nabla(v\eta)\cdot
\nabla(v\eta)\,d\g\,dt
\\ & = \int  _{0} ^{\infty}\int_{\R^n}v\eta L(v\eta)\,d\g\,dt
\\ & = \int_{I(x_{0},t_{0},2r)} v\eta L(v\eta)\,d\g\,dt,
\end{align*} we obtain
\begin{equation}\label{eq:threeterms}
\begin{aligned}\int  _{I(x_{0},t_{0},r)}  |\nabla v|^{2}\,d\gamma\,dt
& \le
\int_{I(x_{0},t_{0},2r)} \eta^2|\eta\nabla v|^{2}\,d\gamma\,dt\\
& \leq
\int_{I(x_{0},t_{0},2r)}   \eta^2|v\nabla \eta|^{2}\,d\gamma\,dt\\
& \qquad +
\Big |\int_{I(x_{0},t_{0},2r)} 2v\eta^2\nabla (v\eta)\cdot\nabla \eta\,d\gamma\,dt \Big|\\
& \qquad + \Big |\int_{I(x_{0},t_{0},2r)} v\eta L(v\eta)\,d\gamma\,dt \Big |.
\end{aligned}
\end{equation}
For the first term on the right-hand side we have the estimate
$$ \int_{I(x_{0},t_{0},2r)}   \eta^2|v\nabla \eta|^{2}\,d\gamma\,dt
\lesssim
\frac{1}{r^2}\int_{I(x_{0},t_{0},2r)}|v|^{2}\,d\gamma\,dt.
$$
For the second term we have, by \eqref{eq:Leta},
\begin{align*}
\Big|\int_{I(x_{0},t_{0},2r)}  2v\eta^2 \nabla (v\eta)\cdot \nabla \eta
\,d\gamma\,dt\Big|
& = \frac12\Big|\int_{I(x_{0},t_{0},2r)} \nabla (v\eta)^{2}\cdot\nabla
\eta^{2}\,d\gamma\,dt\Big|\\
& \leq \frac12\Big|\int_{\R^{n}} (v\eta)^2 L\eta^{2}\,d\gamma \,dt\Big| \\
& \lesssim \frac{1 + r|x_0|}{r^{2}}\int  _{I(x_{0},t_{0},2r)}
|v|^{2}\,d\gamma\,dt
\end{align*}
where we used the fact that $\eta^2$ satisfies the same
assumptions as $\eta$ and \eqref{eq:Leta} was applied to $\eta^2$.
To estimate the third term on the right-hand side of \eqref{eq:threeterms} we
substitute the identity
\begin{align*}
L(v\eta) = \eta L v + v L\eta - 2\nabla v \cdot\nabla \eta = -\eta
\partial_{t} v + v L\eta -2 \nabla v \cdot\nabla \eta
\end{align*}
and estimate each of the resulting integrals:
\begin{align*}
\Big|\int_{I(x_{0},t_{0},2r)}  v \eta^{2}\partial_{t}v\,d\gamma\, dt\Big|
& = \frac12\Big|\int_{I(x_{0},t_{0},2r)} \eta^2 \partial_{t}v^2\,d\gamma
\,dt\Big|\\
& = \frac12\Big|\int_{I(x_{0},t_{0},2r)} v^2\partial_{t}\eta^2\,d\gamma
\,dt\Big| \\
& = \Big|\int_{I(x_{0},t_{0},2r)} v^2\eta\partial_{t}\eta\,d\gamma \,dt\Big| \\
& \lesssim\frac{1}{r^{2}}\int  _{I(x_{0},t_{0},2r)} |v|^2\,d\gamma\, dt,\\
\Big|\int_{I(x_{0},t_{0},2r)}  v^2 \eta L\eta\,d\gamma\, dt\Big|
& \lesssim\frac{1 + r|x_0|}{r^{2}}\int  _{I(x_{0},t_{0},2r)} |v|^2\,d\gamma\,
dt, \\
\Big|\int_{I(x_{0},t_{0},2r)}  v\eta \nabla v\cdot \nabla \eta\,d\gamma\,dt\Big|
& = \frac14\Big|\int_{I(x_{0},t_{0},2r)} \nabla v^2\cdot\nabla
\eta^2\,d\gamma\,dt\Big|\\
& = \frac14\Big|\int_{\R^n} v^2 L\eta^2\,d\gamma \,dt\Big| \\
& \lesssim \frac{1 + r|x_0| }{r^{2}}\int  _{I(x_{0},t_{0},2r)}
|v|^2\,d\gamma\,dt.
\end{align*}
\end{proof}

Below we shall apply the lemma with $v(x,t) = e^{-t L} u(x),$ where $u$ is a function in $C_{\rm c}(\R^n)$. From the representation $e^{-t L} u(x) = \int_{\R^n} M_{t}(x,y) u(y)\, dy$ where $M$ is the Mehler kernel (see, e.g., \cite{survey}), it follows that $v$ satisfies the differentiability and boundedness assumptions of the lemma.

We can now prove the main result of this paper.
Recall that
\begin{align*}
S_{a}u(x) &
= \Big(\int_{\Gamma_x^{(1,a)}(\gamma)}
\frac{1}{\gamma(B(y,t))}|t\nabla
e^{-t^{2}L}u(y)|^{2}\,d\gamma(y)\,\frac{dt}{t}\Big)^\frac12
\\ & = \Big(\int_{\R^n\times(0,\infty)}
\frac{1_{B(x,t)}(y)}{\gamma(B(y,t))}1_{(0,am(x))}(t)|t\nabla
e^{-t^{2}L}u(y)|^{2}\,d\gamma(y)\,\frac{dt}{t}\Big)^\frac12.
\end{align*}
It will be convenient to define, for $\e>0$,
\[
S_{a}^{\e}u(x) := \Big(\int_{\R^n\times(0,\infty)}
\frac{1_{B(x,t)}(y)}{\gamma(B(y,t))}1_{(\e,am(x))}(t)|t\nabla
e^{-t^{2}L}u(y)|^{2}\,d\gamma(y)\,\frac{dt}{t}\Big)^\frac12.\]

\begin{proof}[Proof of Theorem \ref{thm:main}]
As in the proof of Lemma \ref{lem:caccio} it suffices to consider real-valued
$u\in C_{\rm c}(\R^n)$.
Throughout the proof we fix $a > 0$ and set $K:=c_{a,1}$ and
$\tilde{K}:=c_{1+2K,2}$ using the notations of Lemma \ref{lem:admsym}.

Let $F\subseteq \R^n$ be an arbitrary closed set and define
$$
F^{*} := \big\{x \in \R^{n} \colon
\gamma(F\cap B(x,r)) \geq \tfrac{1}{2} \gamma(B(x,r))\ \forall r \in
(0,\tilde Km(x)] \;\big\}.
$$
Note that, since $F$ is closed, $F^* \subseteq F$.
For $0<\e<1$ and $1<\alpha<2$  put
$$
R_{\alpha} ^{\e} (F^{*}) := \, \{(y,t) \in \R^{n}\times (0,\infty) \colon
d(y,F^{*})<\alpha t \ \text{and}\ t \in (\a^{-1}\e,\a K m(y))\}$$
and let
$\partial R_{\alpha} ^{\e}(F^*)$ be its topological boundary.
As in \cite[page
162]{fs} and \cite[page 206]{littleStein}
we may regularise this set and thus assume it admits a surface measure
$d\sigma_\a^\e(y,t)$. Applying first Green's formula in $\R^n$ to the section of
$R_{\alpha} ^{\e}(F^{*})$
at level $t$ and using the definition of $L$ (see \eqref{eq:L}), and subsequently the fundamental theorem of
calculus in the $t$-variable,
 we obtain the estimate
\begin{align*}
\int  _{F^{*}} & |S_{a}^{\e}u(x)|^{2} \,d\gamma(x)
\\ & = \int_{\R^n \times (0,\infty)}\int_{F^*}
\frac{1_{B(x,t)}(y)}{\gamma(B(y,t))}1_{(\e,am(x))}(t)|t\nabla
e^{-t^{2}L}u(y)|^{2}\,d\gamma(x)\,d\gamma(y)\,\frac{dt}{t}
\\ & \stackrel{\rm(i)}{\le}
 \int_{\R^n \times (0,\infty)}\int_{F^*}
\frac{1_{B(y,t)}(x)}{\gamma(B(y,t))}1_{(\e,Km(y))}(t)|t\nabla
e^{-t^{2}L}u(y)|^{2}\,d\gamma(x)\,d\gamma(y)\,\frac{dt}{t}
\\ & \stackrel{\rm(ii)}=
\int_{\R^n \times (0,\infty)}
\frac{\gamma(B(y,t)\cap F^*))}{\gamma(B(y,t))}1_{\{d(y,F^*)< t\}}1_{(\e,Km(y))}(t)|t\nabla
e^{-t^{2}L}u(y)|^{2}\,d\gamma(y)\,\frac{dt}{t}
\\ & \le
\int_{\R^n \times (0,\infty)}
1_{\{d(y,F^*)< t\}}1_{(\e,Km(y))}(t)|t\nabla
e^{-t^{2}L}u(y)|^{2}\,d\gamma(y)\,\frac{dt}{t}
\\ & \leq  \int _{R_{\alpha} ^{\e}(F^{*})} |t\nabla e^{-t^{2}L}u(y)|^{2}\,
d\gamma(y)\,\frac{dt}{t} \\
& \lesssim \int _{R_{\alpha} ^{\e}(F^{*})} tLe^{-t^{2}L}u(y)
\cdot e^{-t^{2}L}u(y)\,d\g(y)\,dt
\\ & \qquad + \int _{\partial R_{\alpha} ^{\e}(F^{*})} |t\nabla
e^{-t^{2}L}u \cdot \nu^{/\!\!/}\!(y,t)||e^{-t^{2}L}u(y)|
e^{-\tfrac12 |y|^{2}}\,d\sigma_\a^\e(y,t)\\
&  \lesssim  \int _{R_{\alpha} ^{\e}(F^{*})}
 - \partial_{t}|e^{-t^{2}L}u(y)|^{2}\,d\gamma(y)\,dt
\\ & \qquad + \int _{\partial R_{\alpha} ^{\e}(F^{*})} |t\nabla
e^{-t^{2}L}u(y)||e^{-t^{2}L}u(y)|e^{-\tfrac12|y|^{2}}\,d\sigma_\a^\e(y,t)\\
& \lesssim \int _{\partial R_{\alpha} ^{\e}(F^{*})}
|e^{-t^{2}L}u(y)\nu^{\perp}(y,t)|^{2}
e^{-\tfrac12|y|^{2}}\,d\sigma_\a^\e(y,t)
\\ & \qquad + \int _{\partial R_{\alpha} ^{\e}(F^{*})} |t\nabla
e^{-t^{2}L}u(y)||e^{-t^{2}L}u(y)|e^{-\tfrac12|y|^{2}}\, d\sigma_\a^\e(y,t).
\end{align*}
In the above computation, $\nu^{/\!\!/}$ denotes the projection of the normal
vector $\nu$
to $R_{\alpha} ^{\e}$ onto $\R^{n}$ and $\nu^{\perp}$ the projection of $\nu$ in
the $t$ direction. In step (i) we used that
$1_{B(x,t)}(y) = 1_{B(y,t)}(x)$ and that
$|x-y|<t$ and $t<am(x)$ imply $t<Km(y)$ via Lemma \ref{lem:admsym}(i);
in step (ii) we used that $B(y,t)\cap F^*\not=\emptyset$ implies
$d(y,F^*)< t$.
Of course, all implied constants in the above inequalities are independent of
$F$, $\e$, $\a$, and $u$.

If $(y,t)\in \partial R_\a^\e(F^*)$, then either $d(y,F^*)=\alpha t$ and
$t\in [\alpha^{-1}\e, \a K m(y)]$,  or else $d(y,F^*)<\alpha t$ and
$t\in \{\alpha^{-1}\e, \a K m(y)\}$. By examining these three cases separately,
each time distinguishing between the possible relative positions of $m(y)$
with respect to the numbers $\frac12\e$, $\alpha^{-1}\e$, and  $\e$,
one checks that $\partial R_\a^\e(F^*)\subseteq  {\tilde B}^{\e} :=
\tilde{B}^{\e}_{1} \cup \tilde{B}^{\e}_{2} \cup\tilde{B}^{\e}_{3} $
with
\begin{align*}
\tilde{B}^{\e}_{1} & := \{(y,t) \in \R^{n} \times (0,\infty) \colon  t \in
[\tfrac12\e,\min\{\e,m(y)\}] \; \text{and} \; d(y,F^{*})\le 2t\}, \\
\tilde{B}^{\e}_{2} &:= \{(y,t) \in \R^{n} \times (0,\infty) \colon  t \in
[\e,m(y)] \; \text{and} \; t\le d(y,F^{*})\le 2t\}, \\
\tilde{B}^{\e}_{3} &:= \{(y,t) \in \R^{n} \times (0,\infty) \colon  t\in [
m(y),2Km(y)] \; \text{and} \; d(y,F^{*})\le 2t\}.
\end{align*}
Now notice that, on $\partial R_\a^\e(F^*)$, we have either
$t=\frac{\e}{\alpha}$, $t=\alpha Km(y)$, or $t=\alpha^{-1}{d(y,F^{*})}$.
Integrating over $\alpha \in (1,2)$ with respect to $\frac{d\alpha}{\alpha}$ and
changing variables using that $\frac{d\a}{\alpha} \sim
\frac{dt}{t}$, we obtain
\begin{align*}
\int  _{F^{*}}  |S_a^{\e}u|^{2}\,d\gamma
& \lesssim \int  _{{\tilde B}^{\e}} |e^{-t^{2}L}u(y)|^{2} d\gamma(y)\,\frac{dt}{t}
\\ & \quad+  \Big(\int  _{{\tilde B}^{\e}} |e^{-t^{2}L}u(y)|^{2}
d\gamma(y)\,\frac{dt}{t}\Big)^{\frac{1}{2}} \Big(\int  _{\tilde{B}^{\e}}
|t\nabla e^{-t^{2}L}u(y)|^{2} d\gamma(y)\,\frac{dt}{t}\Big)^{\frac{1}{2}}
\\ & \lesssim
\int  _{\tilde{B}^{\e}} |e^{-t^{2}L}u(y)|^{2} d\gamma(y)\,\frac{dt}{t}
+  \int  _{\tilde {B}^{\e}}
|t\nabla e^{-t^{2}L}u(y)|^{2} d\gamma(y)\,\frac{dt}{t}.
\end{align*}
Here, and in the estimates to follow, the implied constants
are independent of $F$, $\e$, and $u$.

We have to estimate the following six integrals:
\begin{align*}
I_{1} &:= \int  _{\tilde{B}^{\e}_{1}} |e^{-t^{2}L}u(y)|^{2}
d\gamma(y)\,\frac{dt}{t}, \qquad
I_{2} := \int  _{\tilde{B}^{\e}_{1}} |t\nabla e^{-t^{2}L}u(y)|^{2}
d\gamma(y)\,\frac{dt}{t}, \\
I_{3} &:= \int  _{\tilde{B}^{\e}_{2}} |e^{-t^{2}L}u(y)|^{2}
d\gamma(y)\,\frac{dt}{t}, \qquad
I_{4} := \int  _{\tilde{B}^{\e}_{2}} |t\nabla e^{-t^{2}L}u(y)|^{2}
d\gamma(y)\,\frac{dt}{t}, \\
I_{5} &:= \int  _{\tilde{B}^{\e}_{3}} |e^{-t^{2}L}u(y)|^{2}
d\gamma(y)\,\frac{dt}{t}, \qquad
I_{6} := \int  _{\tilde{B}^{\e}_{3}} |t\nabla e^{-t^{2}L}u(y)|^{2}
d\gamma(y)\,\frac{dt}{t}.
\end{align*}

We start with $I_{1}$ and remark that, for $(y,t) \in \tilde{B}^{\e}_{1}$, there
exists $x \in F^{*}$ such that $|x-y|\le 2t$.
Since $t\le \min\{\e,m(y)\}\le m(y)$, by Lemma \ref{lem:admsym}(i) we have $t\le
c_{1,2}m(x)$ and hence $t\le \tilde Km(x)$, noting that $c_{1,2} \leq c_{1 + 2 K, 2} = \tilde K$.
Therefore,
by the definition of $F^*$,
\begin{align}\label{eq:FcapB}
\gamma(F\cap B(x,t))\geq\tfrac{1}{2} \gamma(B(x,t)).
\end{align}
(At this point the reader may wonder why $F^*$ is defined in terms of $\tilde K$
and not in terms of $c_{1,2}$. The reason is that the argument will be repeated
in the estimation of $I_2$, $I_5$, and $I_6$; in the latter two cases, the definition
of $B^{\e}_3$ implies that one
only gets $t\le 2Km(y)$ and hence $t\le c_{2K,2}m(x) \leq c_{1+2K,2}m(x)$).
By \eqref{eq:FcapB} and doubling property for the admissible ball
$B(x,t)\in\mathscr{B}_{c_{1,2}}$,
 $$\gamma(F\cap B(y,3t)) \ge \gamma(F\cap B(x,t))\geq\tfrac{1}{2}
\gamma(B(x,t))\gtrsim \g(B(x,3t))\ge \g(B(y,t)),
 $$
and therefore
\begin{equation}\label{eq:vee}
\bal I_1
& \lesssim \int_{\tilde B_1^\e} \int_{F\cap B(y,3t)} \frac{1}{\g(B(y,t))}| e^{-t^2L}u(y)|^2
\,d\g(z)\,d\g(y)\,\frac{dt}{t}
\\ & \le \int_{\R^n}\int_{\frac12\e}^{\frac12\e\vee\min\{\e,m(y)\}}\int_{F}
 \frac{1_{B(y,3t)}(z)}{\g(B(y,t))}|e^{-t^2L}u(y)|^2 d\g(z)\,\frac{dt}{t}\,d\g(y)
\\ & \le
\int_{F}\int_{\frac12\e}^{\frac12\e\vee\min\{\e,c_{1,3}m(z)\}}\int_{B(z,3t)} \frac{1}{\g(B(y,t))}|
e^{-t^2L}u(y)|^2
\,d\g(y)\,\frac{dt}{t}\,d\g(z),
\eal
\end{equation}
where in the last inequality we used that $t\le m(y)$ and $|y-z|<3t$ imply $t\le
c_{1,3}m(z)$ by Lemma \ref{lem:admsym}(i).

Fix $(z,t)\in F\times
(\frac12\e,\frac12\e\vee\min\{\e,c_{1,3}m(z)\})$.
For all $y\in B(z,3t)$ we have $B(z,3t)\subseteq B(y,6t)$ and therefore, by the doubling property for $B(y,t)$
(noting that from $t<c_{1,3}m(z)$ and $|z-y|<3t$ it follows that
$t<c_{c_{1,3},3}m(y)$, so $B(y,t)$ is an admissible ball in
$\mathscr{B}_{c_{c_{1,3},3}}$),
$$
\bal
\int_{B(z,3t)}\frac{1}{\g(B(y,t))}| e^{-t^2L}u(y)|^2\,d\g(y)
& \lesssim \frac1{\g(B(z,3t))}\int_{B(z,3t)}| e^{-t^2L}u(y)|^2
\,d\g(y)
\\ & \le |T_{(3,c_{1,3})}^*u(z)|^2,
\eal$$
where the last inequality follows from
$(z,t)\in \Gamma_z^{(3,c_{1,3})}(\g)$.
Combining this with the previous inequality it
follows that
$$I_1  \lesssim
\int_{F}\int_{\frac12\e}^\e |T_{(3,c_{1,3})}^* u(z)|^2\,\frac{dt}{t}\,d\g(z)
\lesssim\int_{F}|T_{(3,c_{1,3})}^* u(z)|^2\,d\g(z).
$$

We proceed similarly for $I_{2}$, using Lemma \ref{lem:caccio}
to handle the gradient. With $\tau(z):= c_{1,3}m(z)$ we have, proceeding as in
\eqref{eq:vee},
\begin{align*}
I_{2} &
\lesssim  \int  _{F} \int  _{\frac12\e} ^{\frac12\e\vee\min\{\e,
\tau(z)\}}\int_{B(z,3t)} \frac{1}{\g(B(y,t))}|t\nabla
e^{-t^{2}L}u(y)|^{2}\,d\gamma(y)\,\frac{dt}{t}\,d\gamma(z)
 \\ & \stackrel{\rm(i)}{\lesssim }  \int  _{F\cap\{\tau(z)\ge \frac12\e\}} \int
_{\frac12\e}^\e
\frac{1}{\gamma(B(z,3\e))}\int_{B(z,3\e)} |t\nabla
e^{-t^{2}L}u(y)|^{2}\,d\gamma(y)\,\frac{dt}{t}\,d\gamma(z)
\\
 & \stackrel{\rm(ii)}{\lesssim}  \int  _{F\cap\{\tau(z)\ge \frac12\e\}}   \sum
_{l=2} ^{7}  \int_{\frac{l\e^{2}}{8}}^{\frac{(l+1)\e^{2}}{8}}
\frac{1}{\gamma(B(z,3\e))} \int
_{B(z,3\e)} |\nabla e^{-sL}u(y)|^{2}
\,d\gamma(y)\,ds\, d\gamma(z).
\end{align*}
In (i) we used the inclusions
$B(z,3t)\subseteq B(z,3\e)\subseteq B(z,6t)\subseteq B(y,9t)$
 together with the doubling property for $B(y,t)$, and
in (ii) we substituted $t^2 =s$.

For each $l\in \{2,\dots,7\}$
we apply Lemma \ref{lem:caccio} with $t_{0}^l =
\frac12(\frac{l\e^{2}}{8}+\frac{(l+1)\e^{2}}{8}) =
\frac{(2l+1)\e^{2}}{16},$ $c^l = 12$ and $(r^l)^2=\frac{\e^2}{16}$.
Together with the doubling property for $B(z,\e)$ (noting that $B(z,\e)\in
\mathscr{B}_{2c_{1,3}}$ in view of $\e\le2t\le 2c_{1,3}m(z)$), this
gives
\begin{align*}
I_{2}  &\lesssim \int_{F\cap\{\tau(z)\ge \frac12\e\}}  \sum   _{l=2} ^7
\int_{\frac{(2l-3)\e^2}{16}}^{\frac{(2l+5)\e^2}{16}}
\frac{1+r^l|z|}{(r^l)^{2}}\\ & \hskip4cm\times
\frac{1}{\gamma(B(z,6\e))}
\int  _{B(z,6\e)} |e^{-sL}u(y)|^{2}\,d\gamma(y)\,ds\,d\gamma(z).
\end{align*}

Fix $(z,s)\in (F\cap\{\tau(z)\ge \frac12\e\})\times
(\frac1{16}\e^2,\frac{19}{16}\e^2)$.
Then from $B(z,6\e)\subseteq B(z,24\sqrt s)\subseteq B(z,30\e)$
and the doubling property for the balls $B(z,\e) \in
\mathscr{B}_{2 c_{1,3}}$
(note that $\e \le 2\tau(z) = 2 c_{1,3} m(z)$),
\begin{align*}
\ & \frac{1}{\gamma(B(z,6\e))}
\int  _{B(z,6\e)} |e^{-sL}u(y)|^{2}\,d\gamma(y)
\\ & \qquad \lesssim \frac{1}{\gamma(B(z,24\sqrt s))}
\int  _{B(z,24\sqrt s)} |e^{-sL}u(y)|^{2}\,d\gamma(y)
 \le |T_{(24,4c_{1,3})}^*u(z)|^2,
\end{align*}
where the last step follows from $(z,\sqrt s)\in\Gamma_z^{(24,4c_{1,3})}(\g)$.
Combining this with the previous estimate we obtain
\begin{align*} I_{2}
& \lesssim   \int_F \sum   _{l=2} ^{7}
\int_{\frac{(2l-3)\e^2}{16}}^{\frac{(2l+5)\e^2}{16}}
\frac{1+r^l|z|}{(r^l)^{2}}
|T^{*}_{(24,4c_{1,3})}u(z)|^{2}\,ds\, d\gamma(z)
\\ & \lesssim \int  _{F}(1+\e|z|) |T^{*}_{(24,4c_{1,3})}u(z)|^{2}\,d\gamma(z),
\end{align*}
where the last step follows from the fact that
$r^l= \frac14\e$.

We proceed with an estimate for $I_{3}$.
Let $$G := \{y\in\R^n\colon 0< d(y,F^*)\le 2m(y)\}.$$
Using Lemma \ref{lem:cover2},
we cover $G$ with a sequence of balls $B(x_k,r_k)$
with $x_k\in G$ and $r_k = \tfrac14 d(x_k,F^*)$ for all $k$, and
\beq\label{eq:sumgamma}
\sum_{k\ge 1} \g(B(x_k, d(x_k,F^*))) \lesssim \g(G) \le \gamma(\complement F^*).
\eeq
with implied constant independent of $u$ and $F$. Note that $B(x_k,r_k)\in
\B_\frac12$ for all $k$.

If $(y,t)\in \tilde B_2^\e$, then $y\in G$ and therefore $y\in B(x_k,r_k)$ for
some $k$, and $\frac12d(y,F^*)\le t\le d(y,F^*)$.
It follows that
\beq\label{eq:I3}
\bal I_{3} & \leq \sum  _{k}
\int  _{B(x_k,r_{k})} \int^{d(y,F^*)}_{\frac12d(y,F^*)}
|e^{-t^{2}L}u(y)|^{2}\,\frac{dt}{t}\,d\gamma(y)
\\ & \leq \sum  _{k}
\int  _{B(x_k,r_{k})} \int^{\frac54d(x_k,F^*)}_{\frac14d(x_k,F^*)}
|e^{-t^{2}L}u(y)|^{2}\,\frac{dt}{t}\,d\gamma(y)
\\ & \leq \sum  _{k}
 \int^{\frac54d(x_k,F^*)}_{\frac14d(x_k,F^*)}\int  _{B(x_k,t)}
|e^{-t^{2}L}u(y)|^{2}\,d\g(y)\,\frac{dt}{t}.
\eal
\eeq
In the second inequality we used that $y\in B(x_k,r_k)$ implies $|x_k-y|<r_k=
\frac14 d(x_k,F^*)$, and the third inequality follows from Fubini's theorem and
the inequality $r_k=\frac14d(x_k,F^*) \leq \frac12d(y,F^*)\leq t$.

Fix an index $k$ and a number
$t\in (\frac14 d(x_k,F^*),\frac54 d(x_k,F^*))$. Since $F^*$ is contained in
the closure of $F$ we may pick $z_k\in F$
such that $|x_k-z_k|<2d(x_k,F^*).$
By the choice of $t$ this implies $|x_k-z_k|< 8t.$
Since by assumption we have $t\le \frac54 d(x_k,F^*) \le \frac52m(x_k)$ (the
second inequality being a consequence of $x_k\in G$),
and since $|x_k-z_k|<8t$, from Lemma \ref{lem:admsym} we conclude that $t\le
dm(z_k)$ with $d:=c_{\frac52,8}$.
We conclude that $(x_k,t)\in \Gamma_{z_k}^{(8,d)}(\g)$  (since by definition
this means that
$|x_k-z_k|\le 8t\le 8d m(z_k)$)
and consequently, using the doubling property for the admissible ball
$B(x_k,t)\in \mathscr{B}_{\frac52}$,
$$
\bal
\ & \frac1{\g(B(x_{k},t))} \int  _{B(x_{k},t)} |e^{-t^{2}L}u(y)|^{2}\,d\gamma(y)
\\ & \qquad \lesssim\frac1{\g(B(x_{k},8t))} \int  _{B(x_{k},8t)}
|e^{-t^{2}L}u(y)|^{2}\,d\gamma(y) \le |T_{(8,d)}^* u(z_k)|^2.
\eal$$
Combining this with the previous inequalities we obtain
\begin{align*}
I_3 & \lesssim \Big(\sup_{z\in F}|T_{(8,d)}^* u(z)|^2\Big)\sum_k
\int^{\frac54d(x_k,F^*)}_{\frac14d(x_k,F^*)}\g(B(x_k,t)) \,\frac{dt}{t}
 \\
& \lesssim \Big(\sup_{z\in F}|T_{(8,d)}^* u(z)|^2\Big)\sum  _{k}
\gamma(B(x_{k},\tfrac54 d(x_k,F^*))) \\
& \lesssim \Big(\sup_{z\in F}|T_{(8,d)}^* u(z)|^2\Big) \gamma(\complement F^*),
\end{align*}
where the last step used \eqref{eq:sumgamma} and the doubling property (recall
that $d(x_k,F^*)\le 2m(x_k)$, so the balls $B(x_k, d(x_k,F^*))$ belong to
$\mathscr{B}_2$).

For estimating $I_{4}$, we let $G$ and $B(x_k,r_k)$ be as in the previous estimate.
Proceeding as in the first two lines of \eqref{eq:I3} and applying the Fubini
theorem, we get
\begin{align*}
I_{4} &\lesssim \sum  _{k}
 \int^{\frac54 d(x_k,F^*)}_{\frac14d(x_k,F^*)}\int  _{B(x_{k},r_k)} |t\nabla
e^{-t^{2}L}u(y)|^{2}\,d\g(y)\,\frac{dt}{t}
 \\ & = \frac12\sum  _{k} \sum_{l=2}^{49}
 \int^{\frac{2l+2}{64} d^2(x_k,F^*)}_{\frac{2l}{64}d^2(x_k,F^*)}\int
_{B(x_{k},r_k)} |\nabla e^{-sL}u(y)|^{2}\,d\gamma(y)\,ds.
\end{align*}
By Lemma \ref{lem:caccio}, applied with $t_0 = \frac{2l+1}{64} d^2(x_k,F^*),$
$c=2$
and $r = \frac{1}{8} d(x_k,F^*)$, this gives the estimate
\begin{align*}
I_{4}
&\lesssim \sum_k \sum_{l=2}^{49}
 \int^{\frac{2l+5}{64} d^2(x_k,F^*)}_{\frac{2l-3}{64}d^2(x_k,F^*)}
\frac{1+d(x_k,F^*)|x_{k}|}{d^2(x_k,F^*)}
 \int  _{B(x_{k},\frac12 d(x_k,F^*))}|e^{-sL}u(y)|^{2}\,d\gamma(y)\,ds\\
 &\le \sum_k \sum_{l=2}^{49}
 \int^{\frac{2l+5}{64} d^2(x_k,F^*)}_{\frac{2l-3}{64}d^2(x_k,F^*)}
\frac{3}{d^2(x_k,F^*)}
 \int  _{B(x_{k},4\sqrt{s})}|e^{-sL}u(y)|^{2}\,d\gamma(y)\,ds,
\end{align*}
where we used that $d(x_k,F^*)\le 2m(x_k)\le \frac2{|x_k|}$ and that $s\ge
\frac1{64}d^2(x_k,F^*)$ implies $\frac12 d(x_k,F^*)\le 4\sqrt s$.

Fix $k$ and pick an element $z_k\in F$ such that
$|x_k-z_k|<2d(x_k,F^*)$. Then for all $s$ in the range of integration we have
$|x_k-z_k|<16\sqrt s$.
Since $\sqrt s \le \frac32 d(x_k,F^*) \le 3 m(x_k)$, from Lemma \ref{lem:admsym}
we conclude that $\sqrt s\le dm(z_k)$ with $d:=c_{3,16}$.
We conclude that $(x_k,4\sqrt s)\in \Gamma_{z_k}^{(4,4d)}(\g)$.
This gives
\begin{align*}I_{4}&\lesssim
\Big( \sup_{z\in F} |T_{(4,4d)}^* u(z)|^2\Big)\sum  _k
\frac{1}{d^2(x_k,F^*)}\int_{\frac1{64} d^2(x_k,F^*)}^{\frac{103}{64}
d^2(x_k,F^*)}\g(B(x_k,4\sqrt s)\,ds
\\ & \lesssim  \Big(\sup_{z\in F} |T_{(4,4d)}^* u(z)|^2\Big)\sum_k
\g(B(x_k,\tfrac12\sqrt{103}d(x_k,F^*)))
\\ & \lesssim  \Big(\sup_{z\in F} |T_{(4,4d)}^* u(z)|^2\Big)\sum_k
\g(B(x_k,d(x_k,F^*)))
\\ & \lesssim \Big(\sup_{z\in F} |T_{(4,4d)}^* u(y)|^2\Big)\gamma(\complement
F^*),
\end{align*}
where the second last step used the doubling property for admissible balls
(recalling that $B(x_k,d(x_k,F^*))\in \mathscr{B}_2$), and the last step used
\eqref{eq:sumgamma}.

To estimate $I_{5}$, we proceed as we did for $I_{1}$:
\begin{align*}
I_5 & \lesssim \int_{\tilde B_3^\e}\int_{F\cap B(y,3t)} \frac{1}{\g(B(y,t))}| e^{-t^2L}u(y)|^2
\,d\g(z)\,d\g(y)\,\frac{dt}{t}
\\ & \le \int_{\R^n}\int_{m(y)}^{2Km(y)}\int_{F} \frac{1_{B(y,3t)}(z)}{\g(B(y,t))} |
e^{-t^2L}u(y)|^2
\,d\g(z)\,\frac{dt}{t}\,d\g(y)
\\ & \stackrel{\rm(i)}{\le}
\int_{F}\int_{(1+3c_{2K,3})^{-1}m(z)}^{c_{2K,3}m(z)}\int_{B(z,3t)} \frac{1}{\g(B(y,t))}
|e^{-t^2L}u(y)|^2 \,d\g(y)\,\frac{dt}{t}\,d\g(z)
\\ & \lesssim \int_{F}|T_{(2K,c_{2K,3})}^* u(z)|^2\,d\g(z),
\end{align*}
where in step (i) we used that $m(y)\le t\le 2Km(y)$ and $|y-z|<3t$ imply $t\le
c_{2K,3}m(z)$ by Lemma \ref{lem:admsym}(i), so $|y-z|<3c_{2K,3}m(z)$, and by an
application of Lemma \ref{lem:admsym}(ii) the latter implies
$m(z)\le (1+3c_{2K,3})m(y)\le (1+3c_{2K,3})t $.

Finally we turn to $I_{6}$, which is treated as $I_{2}$. With $c=c_{2K,3}$ and $d
= (1+3c_{2K,3})^{-1}$ as in the previous estimate, and using Lemma
\ref{lem:caccio} as in the estimate for $I_2$, we get
$$
\bal
I_6
& \lesssim   \int  _{F} \int  _{dm(z)}
^{cm(z)}\frac{1}{\gamma(B(z,3t))}\int_{B(z,3t)} |t\nabla
e^{-t^{2}L}u(y)|^{2}\,d\gamma(y)\,\frac{dt}{t}\,d\gamma(z)
\\ & = \frac12   \int  _{F} \int  _{d^2m(z)^2}
^{c^2m(z)^2}\frac{1}{\gamma(B(z,3t))}\int_{B(z,3t)} |\nabla
e^{-sL}u(y)|^{2}\,d\gamma(y)\,ds\,d\gamma(z)
\\ & \lesssim \int_{F} (1+m(z)|z|) |T_{(\rr,\al)}^* u(z)|^2\,d\g(z)
\\ & \lesssim \int_{F} |T_{(\rr,\al)}^* u(z)|^2\,d\g(z),
\eal
$$
for certain $\rr,\al$ independent of $u$, $F$, and $\e$.

Combining all these estimates, we obtain six couples $(\rr^{(j)},\al^{(j)})$
($j=1,...,6$),
and, passing to the limit $\e\downarrow 0$, the following estimate,
valid for arbitrary closed subsets $F\subseteq \R^n$:
\beq\label{eq:Su}
\bal
\ & \int  _{F^{*}} |S_{a}u(x)|^{2}\,d\gamma(x) \\ &
\qquad \lesssim
\sum  _{j=1} ^{6} \Big(
\big(\sup_{z\in F}
|T^{*}_{(\rr^{(j)},\al^{(j)})}u(z)|^{2}\big)\gamma(\complement F^{*})+ \int  _{F}
|T^{*}_{(\rr^{(j)},\al^{(j)})}u(z)|^{2}\,d\gamma(z) \Big),
\eal
\eeq
with constants independent of $F$ and $u$.

To finish the proof, we consider the distribution functions
$$
\bal\gamma_{S_{a}u}(\sigma)
& :=\gamma\big(\big\{x \in \R^{n}\colon S_{a}u(x)>\sigma\big\}\big), \\
 \gamma_{T^{*}_{(\rr^{(j)},\al^{(j)})}u}(\sigma) &:=\gamma\big(\big\{x \in
\R^{n}\colon
T^{*}_{(\rr^{(j)},\al^{(j)})}u(x)>\sigma\big\}\big), \quad j=1,\dots,6.
\eal
$$
We fix $\sigma>0$ for the moment, and apply \eqref{eq:Su} to the set
$$F_\sigma := \big\{z\in\R^n\colon T_{(\rr^{(j)},\al^{(j)})}^* u(z)\le \sigma, \
j=1,\dots,6\big\},$$
and claim that
$\complement F_\sigma^{*}
\subseteq \{M_{\tilde K}^{*}(1_{\complement F_\sigma}) > \frac{1}{2}\}$.
Indeed, let $x\in \complement F_\sigma^{*}$ and fix $r \in (0,\tilde Km(x)]$ such that
$\gamma(B(x,r)\cap F_\sigma)<\frac12\gamma(B(x,r))$. Then
$$M_{\tilde K}^{*}(1_{\complement F_\sigma})(x) \ge \frac{\gamma(B(x,r)\cap
\complement F_\sigma)}{\gamma(B(x,r))} > \frac{1}{2},$$
proving the claim.

Lemma \ref{lem:maximal} (with admissibility parameter $\tilde K$, $\tau = \frac12$, applied to the function $1_{\complement F_\sigma}$)
gives us
$\gamma(\complement F_\sigma^{*})
\lesssim
\gamma(\complement F_\sigma)$.
Using this in combination with the definition of $F_\sigma$, for $j=1,\dots,6$ we obtain
\begin{align*}
 \frac1{\sigma^2}\big(\sup_{z\in F_\sigma} |T^{*}_{\rr^{(j)},\al^{(j)}
}u(z)|^{2}\big)\gamma(\complement F_\sigma^{*})
& \le \g(\complement F_\sigma^*) \lesssim \g(\complement F_\sigma) \le \sum_{k=1}^6
 \g\big(\big\{T^{*}_{(\rr^{(k)},\al^{(k)})}u>\sigma\big\}\big).
\end{align*}
Hence, from \eqref{eq:Su} we infer
$$
\bal
\gamma_{S_{a}u}(\sigma)
& \le \gamma(F_\sigma^{*}\cap\{S_{a}u > \sigma\}) +  \gamma(\complement F_\sigma^{*}) \\
&\lesssim \frac{1}{\sigma^{2}} \int  _{F_\sigma^{*}} |S_{a}u(x)|^{2}\,d\gamma(x) +
\gamma(\complement F_\sigma)
\\ & \lesssim \sum_{j=1}^6 \Big[\gamma_{T_{(\rr^{(j)}, \al^{(j)})}^*u}(\sigma) +
\frac{1}{\sigma^{2}}
\int  _{F_\sigma}
|T^{*}_{(\rr^{(j)},\al^{(j)})}u(z)|^{2}\,d\gamma(z)\Big]
\\ & \lesssim  \sum_{j=1}^6 \Big[ \gamma_{T_{(\rr^{(j)},\al^{(j)})}^*u}(\sigma) +
\frac{1}{\sigma^{2}} \int  _{0}
^{\sigma} t\gamma_{T_{(\rr^{(j)},\al^{(j)})}^*u}(t)\,dt\Big].
\eal
$$
Integrating over $\sigma$ and noting that
$$
\bal \int_0^\infty \frac{1}{\sigma^{2}} \int  _{0} ^{\sigma}
t\gamma_{T_{(\rr^{(j)},\al^{(j)})}^*u}(t)\,dt\,d\sigma
& = \int_0^\infty t\gamma_{T_{(\rr^{(j)},\al^{(j)})}^*u}(t)\int_t^\infty
\frac1{\sigma^2}
\,d\sigma\,dt
\\ & = \int_0^\infty \gamma_{T_{(\rr^{(j)},\al^{(j)})}^*u}(t)\,dt = \big\n
T_{(\rr^{(j)},\al^{(j)})}^*u\big\n_{L^1(\g)},
\eal$$
 we get, by Theorem \ref{thm:aperture} and
with $\al^{(j)}{}'$ as in the statement of that theorem,
\begin{align*}
\|S_{a}u\|_{L^1(\g)}
 \lesssim \sum_{j=1}^6 \big\|T^{*}_{(\rr^{(j)},\al^{(j)})}u\big\|_{L^1(\g)}
 \lesssim \sum_{j=1}^6
\big\|T^{*}_{(1,\al^{(j)}{}')}u\big\|_{L^1(\g)}
\le 6\big\|T^{*}_{(1,\al')}u\big\|_{L^1(\g)},
\end{align*}
where $\al' = \displaystyle\max_{j=1,\dots,6} \al^{(j)}{}'$.
\end{proof}

\begin{remark}\label{rem:}
In \cite{mnp2} the cones
$$\widetilde \Gamma^{(\rr,\al)} _{x}(\gamma) := \big\{(y,t) \in \R^{n}\times(0,\infty)\colon
|y-x| < \rr t  \textrm{ and }  t < \al  m(y)\big\}$$
are used implicitly. In view of the inclusions
$$ \Gamma^{(\rr,\al)} _{x}(\gamma)
\subseteq \widetilde \Gamma^{(\rr,c_{a,A})} _{x}(\gamma), \qquad
 \widetilde \Gamma^{(\rr,\al)} _{x}(\gamma)
\subseteq \Gamma^{(\rr,c_{a,A})} _{x}(\gamma),
$$
the corresponding functions  $\widetilde S$ and $\widetilde T^*$
satisfy the pointwise bounds
$$\widetilde S_{a}u(x) \lesssim S_{c_{a,A}} u(x), \qquad
S_{a} u(x) \lesssim \widetilde S_{c_{a,A}}u(x)$$
and
$$\widetilde T_{(A,a)}^*u(x) \lesssim T_{(A,c_{a,A})}^*u(x), \qquad
T_{(A,a)}^* u(x) \lesssim \widetilde T_{(A,c_{a,A})}^* u(x).$$
In particular, Theorem \ref{thm:main} remains valid if we replace $S$ and $T^*$ by $\widetilde S$ and $\widetilde T^*.$
Remark also that \cite[Theorem 3.8]{mnp2} gives a change of aperture formula for tent spaces that implies an analogue of Theorem \ref{thm:aperture} for the square function $\tilde{S}$ for $A, A' > 1$.
\end{remark}

\begin{remark}
We conclude with a few words on reverse inequalities, i.e., controls of the maximal function by the square function.
In the euclidean case, such inequalities are generally proven via atomic decompositions, usually going through tent spaces. We have developed, in \cite{mnp2}, the gaussian analogues of these spaces and their atomic decomposition. However, to deduce a reverse inequality, we would then need an adequate analogue of the Calder\'on reproducing formula (analogues exist, but do not seem to be appropriate), and such a formula is involving all $t\in(0,\infty)$ rather than just $t\in(0,a m(x))$. A complete Hardy space theory is thus likely to require an understanding of the ``non-admissible" parts of objects such as $T^{*}u$, $Su$, or Mauceri-Meda's atoms (i.e. the part corresponding to the scales $t \in (am(x),\infty)$, for which balls are not admissible), or a technique that allows one to avoid such non-admissible part in arguments involving Calder\'on reproducing formulae.
This is the subject of some of our on-going investigations.
\end{remark}

\end{document}